\newcommand{\redsout}{\bgroup\markoverwith{\textcolor{red}{\rule[0.5ex]{2pt}{.4pt}}}\ULon}
\newcommand{\LC}{\left(}
\newcommand{\RC}{\right)}
\newcommand{\p}{\partial}
\numberwithin{equation}{section}
\newtheorem{theorem}{Theorem}[section]
\newtheorem{corollary}[theorem]{Corollary}
\newtheorem{proposition}[theorem]{Proposition}
\newtheorem{lemma}[theorem]{Lemma}
\newtheorem{remark}{Remark}[section]
\newcommand{\R}{\mathbb R}
\author[Lai]{Ru-Yu Lai}
\address{School of Mathematics, University of Minnesota, Minneapolis, MN 55455, USA}
\curraddr{}
\email{rylai@umn.edu}
\author[Uhlmann]{Gunther Uhlmann}
\address{Department of Mathematics, University of Washington, Seattle, WA 98195, USA}\address{HKUST Jockey Club Institute for Advanced Study, HKUST, Clear Water Bay, Kowloon, Hong Kong.}
\curraddr{ }
\email{gunther@math.washington.edu}
\author[Yang]{Yang Yang}
\address{Department of Computational Mathematics Science and Engineering, Michigan State University, East Lansing, MI 48824, USA}
\curraddr{}
\email{yangy5@msu.edu}
\thanks{\textbf{Key words}: Nonlinearity, Boltzmann equation, Inverse problems, Collision operator}
\title{Reconstruction of the collision kernel in the nonlinear Boltzmann equation}
\begin{document}

\maketitle
\begin{abstract}
We consider an inverse problem for the Boltzmann equation with nonlinear collision operator in dimensions $n\geq 2$. We show that the kinetic collision kernel can be uniquely determined from the incoming-to-outgoing mappings on the boundary of the domain provided that the kernel satisfies a monotonicity condition. Furthermore, a reconstruction formula is also derived.
The key methodology is based on the higher-order linearization scheme to reduce a nonlinear equation into simpler linear equations by introducing multiple small parameters into the original equation. 
 
\end{abstract}

\section{Introduction}
\subsection{Motivation.}
Kinetic theory describes the dynamics of a large number of particles from a microscopic point of view. 
In particular, kinetic theory enjoys a lot of unique properties and demonstrates complicated mathematical features, which put it at a very important place for scientific studies. 
Applications of the kinetic theory include the dynamics of dilute charged particles, the semiconductor device, and space plasma physics \cite{	CGR2011, Howes}.

Kinetic equations model the evolution of a many-body particle system by means of a single-particle distribution function. The collision operators are particularly crucial for approximating the underlying details of the many-body particle interactions. Among all collision operators, arguably the most well-known one is the Boltzmann collision operator that describes the binary particle interaction by a kinetic distribution $F=F(x,v)$ and takes the form
\begin{align*}
Q(F,F)= \int_{\R^3}\int_{\mathbb{S}^2} q(\xi,\theta) [F(x,u')F (x,v') - F (x,u)F (x,v)] \,d\omega du,
\end{align*}
where $\xi=|v-u|$ and $\cos\theta=(v-u)\cdot \omega /|v-u|$, $\omega\in\mathbb{S}^{2}$. 
The vectors 
\begin{align}\label{velocities after collision}
u'=u-[(u-v)\cdot\omega]\omega\ \ \hbox{and }\ v'=v+[(u-v)\cdot\omega]\omega
\end{align} 
denote velocities after a collision of particles having velocities $v,u$ before the collision.  
The function $q(\xi,\theta)$ is called the collision cross section (kernel) and its form depends on the species of particles. For example, in the hard potential, the collision kernel is
$
q(\xi,\theta)= |v-u|^\gamma q_0\LC{v-u\over|v-u|}\cdot\omega\RC,
$
where $0<\gamma\leq 1$ and
$\int_{\mathbb{S}^2}q_0(\theta\cdot \omega)d\omega<\infty$ for $\theta\in\mathbb{S}^2$.

In the forward problem, there have been substantial contributions in the mathematical study of various aspects of Boltzmann equations. These involve the existence and uniqueness of the solutions, the decay of solutions toward a Maxwellian, as well as the connection between the kinetic theory and fluid dynamics,  see for instance \cite{DV05, DLion89, GSR1, GSR2, Guo2010, IS84, Kim2011, MW99, Villanibook} and the references therein.

\subsection{Inverse Problem.}
The inverse problem for kinetic equations is to find out hidden properties of the unknown parameters in the equations from the experimental data. 
Due to the importance of the collision operator in the kinetic theory, there is an increasing interest in solving these problems. The aim here is to study the identification of the unknown collision kernel from indirect measurements on the boundary.

Let us describe the Boltzmann equation studied in this article. Let $\Omega\subset\R^n$ be a bounded domain with $C^\infty$ boundary $\p\Omega$ with $n\geq 2$. 
We consider the following Boltzmann equation:
\begin{align}\label{intro: boltzmann equ}
\left\{\begin{array}{ll}
v\cdot \nabla_x F = Q(F,F) & \hbox{in }\Omega\times\R^n,  \\
F=g & \hbox{on }\Gamma_-,
\end{array}\right.
\end{align}
where $F(x,v)$ is the distribution function that depends on the position $x\in \Omega$ and the velocity $v\in \R^n$. Throughout this paper, the collision operator takes the form
\begin{align}\label{def:collision}
    Q(H_1,H_2 )= \int_{\R^n}\int_{\mathbb{S}^{n-1}} B(v,u,\omega) [H_1(x,u')H_2(x,v') - H_1(x,u)H_2(x,v)] \,d\omega du,
\end{align}
where $B(v,u,\omega)$ is the collision kernel and $u',\,v'$ are defined in \eqref{velocities after collision}.

We denote the boundary operator $\mathcal{A}$ that maps from the incoming data $F\in C(\Gamma_-)$ on $\Gamma_-$ to the outgoing one on $\Gamma_+$ by 
\begin{align}\label{bdry operator}
\mathcal{A}: F|_{\Gamma_-} \mapsto F|_{\Gamma_+}\in C(\Gamma_+).
\end{align}
Here the sets $\Gamma_{\pm}$ are defined through
\begin{align*} 
\Gamma_{\pm}:=\{(x,v)\in \p \Omega\times \R^n: \pm \ n(x)\cdot v > 0\}, 
\end{align*}
where $n(x)$ is the unit outer normal to $\p\Omega$ at the point $x\in \p\Omega$. It follows from Section~\ref{sec:Preliminaries} that the boundary value problem \eqref{intro: boltzmann equ} is well-posed for small boundary data $g\in  C(\Gamma_-)$. Hence, the map $\mathcal{A}$ is well-defined within the class of small boundary data.
The inverse problem in this paper concerns the extraction of the information of the collision kernel $B$ from the incoming-to-outgoing boundary map $\mathcal{A}$.

There have been related investigations in inverse problems for kinetic equations. One widely studied one is the radiative transfer equation, a linear Boltzmann equation with the linear collision operator. Let us introduce the problem for the RTE briefly below. 
The main objective is to determine the optical parameters from the albedo operator, that is known as the associated boundary operator to the RTE.
In particular, the uniqueness and stability issues have been extensively addressed in the literature.
In \cite{IKun, CS1, CS2, CS3, CS98, SU2d}, the parameters are uniquely determined from the boundary measurements.
The key ingredient of such reconstruction mainly replies on the singular decomposition of the collision kernel that was developed in \cite{CS2,CS98}.
In terms of the stability, Lipschitz stability estimates were studied in \cite{Bal14, Bal10,  Bal18, BalMonard_time_harmonic, LLU2018, Machida14, Wang1999, ZhaoZ18}. Furthermore, this inverse problem for the RTE has also been investigated in the Riemannian setting, see for example \cite{AY15, DPSU07, MST10, MST10stability, MST11, McDowall04}.

To study inverse problems for nonlinear equations, there is a classical method introduced by Isakov \cite{Isakov93}. 
This method is to show that the boundary map for the nonlinear equation determines the analogue for its linearized equation. Then one can apply the existing result of inverse problems for such linearized equation to identify the unknown property.
However this method does not work for the inverse problem considered here.
Since the collision operator highly depends on the velocities before and after the collisions, the first linearization of \eqref{intro: boltzmann equ} is fundamentally different from the RTE (a linear Boltzmann equation). As a result, the previously known theory for the RTE does not provide direct help to determine the kernel of \eqref{def:collision}.

In \eqref{intro: boltzmann equ}, the nonlinear interaction in the collision operator \eqref{def:collision} introduces certain degree of difficulty to the investigation of the inverse problem. 
To overpass this difficulty, we introduce the higher-order linearization technique to the nonlinear Boltzmann equation. This technique employs nonlinearity as a tool in solving inverse problems for nonlinear equations. 
Its central idea is based on bringing in several small parameters into the data, and then differentiating the nonlinear equation with respect to these parameters to earn simpler linearized equations. 
In particular, the work \cite{KLU2018} discovered that the nonlinearity can be beneficial in solving the inverse problem for the nonlinear hyperbolic equation, see also \cite{CLOP,LUW2018} and the references therein. For the nonlinear elliptic equation, the works \cite{CNV19, Kang2002, Sun96, SU97} have studied the second order linearization of the nonlinear boundary map. Moreover, this method has been applied to study various inverse problems for elliptic equations with power-type nonlinearities in \cite{AZ17, FL2019, KU201909, KU2019, LLLS201903, LLLS201905}.

When one applies the higher-order linearization to the problem \eqref{intro: boltzmann equ}, one can expect that the analysis of recovering the kernel will be very different from the case for the elliptic equations with nonlinearity, such as $\Delta u + q(x)u^k =0$ for a positive integer $k$ in \cite{KU2019,LLLS201903}. The difference not only comes from the type of equations, but also the form of nonlinearity. 
Compared to the nonlinearity $u^k(x)$ in the elliptic equations, the nonlinearities $F(x,u')F(x,v')$ and $F(x,u)F(x,v)$ here depend on different variables. Thus, the second linearized equation has more terms than pure power-type nonlinearity.
Nevertheless, the unknown kernel only appears in the second linearization of \eqref{intro: boltzmann equ} and leaves the first linearization of \eqref{intro: boltzmann equ} plenty of freedom to choose its solutions. These turn out to be a crucial ingredient to determine the kernel. The detailed discussion is in Section~\ref{sec:recovery} and Section~\ref{sec:formula}.

\subsection{Main Results.}
Let $\Omega\subset\R^n$ be a bounded domain with $C^\infty$ boundary $\p\Omega$ with $n\geq 2$.
For $(x,v)\in \overline\Omega\times(\R^n\setminus\{0\})$, we define
$\tau_\pm(x,v)$ as the exit time from the point $x$ to the boundary $\p\Omega$ in the direction $\pm v$, namely,
$$
   \tau_\pm(x,v) :=\sup\{s\geq 0:\ x\pm sv \in \Omega  \}.
$$

Suppose that the kernel $B\in C(\R^n\times\R^n\times\mathbb{S}^{n-1})$ satisfies the following condition:
there exists a constant $M>0$ such that
\begin{align}\label{constraint B}
\tau_\pm(x,v) \int_{\R^n}\int_{\mathbb{S}^{n-1}}|B(v,u,\omega)| \,d\omega du < M<\infty
\end{align}
for all $(x,v)\in\overline\Omega\times\R^n$.
We now state the main results.

\begin{theorem}[Monotonicity uniqueness]\label{main thm}
Let $\Omega\subset\R^n$ be a bounded domain with $C^\infty$ boundary $\p\Omega$ with $n\geq 2$. 
Let the collision kernel $B_j\equiv B_j(v,u,\omega)$ be in $C(\R^n\times\R^n\times\mathbb{S}^{n-1})$ and satisfy \eqref{constraint B}. Let $\mathcal{A}_j$ be the boundary operator of the problem \eqref{intro: boltzmann equ} with the kernel $B$ replaced by $B_j$ for $j=1,2$.
Suppose that 
$$
\mathcal{A}_1(g)=\mathcal{A}_2(g) 
$$ 
for all $g\in C(\Gamma_-)$ with $\|g\|_{C(\Gamma_-)}<\varepsilon$, where $\varepsilon$ is a sufficiently small number.
If $B_1\geq B_2$ pointwisely in $\R^n\times\R^n\times\mathbb{S}^{n-1}$ (Monotonicity condition), then   
$$
B_1=B_2 \ \ \hbox{everywhere  in }\R^n\times\R^n\times\mathbb{S}^{n-1}.
$$	
\end{theorem}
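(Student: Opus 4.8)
The plan is to apply the higher-order linearization scheme to \eqref{intro: boltzmann equ} and then use the monotonicity hypothesis to recover $B_1-B_2$ at every point. First I would linearize. By the well-posedness and differentiable-dependence results of Section~\ref{sec:Preliminaries} (which rely on \eqref{constraint B}), for a fixed $g_0\in C(\Gamma_-)$ and small $\epsilon$ the solution $F^{(i)}_\epsilon$ of \eqref{intro: boltzmann equ} with kernel $B_i$ and incoming data $\epsilon g_0$ depends twice continuously differentiably on $\epsilon$, with $F^{(i)}_0=0$. Differentiating the equation once at $\epsilon=0$, and using that $Q$ is bilinear with $Q(0,\cdot)=Q(\cdot,0)=0$, gives that $u:=\p_\epsilon F^{(i)}_\epsilon|_{\epsilon=0}$ solves the free transport equation $v\cdot\nabla_x u=0$ in $\Omega\times\R^n$ with $u|_{\Gamma_-}=g_0$; in particular $u$ is independent of $i$ and is constant along every line in the direction $v$. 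Differentiating twice gives that $w_i:=\p_\epsilon^2F^{(i)}_\epsilon|_{\epsilon=0}$ solves $v\cdot\nabla_x w_i=2Q_i(u,u)$ with $w_i|_{\Gamma_-}=0$, where $Q_i$ is the collision operator with kernel $B_i$. Differentiating the identity $\mathcal{A}_1(\epsilon g_0)=\mathcal{A}_2(\epsilon g_0)$ twice in $\epsilon$ yields $w_1|_{\Gamma_+}=w_2|_{\Gamma_+}$.

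Next I would reduce the problem to a statement about the gain term of the collision operator. Put $w:=w_1-w_2$ and let $\widetilde Q$ be the collision operator with kernel $\widetilde B:=B_1-B_2\ge 0$; then $v\cdot\nabla_x w=2\widetilde Q(u,u)$ with $w|_{\Gamma_-}=w|_{\Gamma_+}=0$. Integrating this transport equation along characteristics from $\Gamma_-$ and evaluating on $\Gamma_+$ shows that the integral of $\widetilde Q(u,u)(\cdot,v)$ over every chord of $\Omega$ in direction $v$ vanishes, and by the bilinearity of $\widetilde Q$ and rescaling of $g_0$ this holds for every transport solution $u$, i.e.\ every continuous $u$ constant along lines in direction $v$. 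I would then fix a target $(v_*,u_*,\omega_*)$ with $(u_*-v_*)\cdot\omega_*\neq 0$ and $\omega_*$ not parallel to $u_*-v_*$ (all other configurations follow by continuity of $\widetilde B$), let $u'_*,v'_*$ be the corresponding post-collision velocities in \eqref{velocities after collision}, and note $u'_*\neq v_*$ and $v'_*\neq v_*$. Choosing $u=u_\delta$ \emph{independent of $x$} with $u_\delta(v)=\phi_\delta(v-u'_*)+\phi_\delta(v-v'_*)\ge 0$ for a continuous bump $\phi_\delta$ supported in the ball of radius $\delta$, one has $u_\delta(v_*)=0$ once $\delta$ is small. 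Since $u_\delta$ is $x$-independent, $\widetilde Q(u_\delta,u_\delta)(\cdot,v_*)$ is constant in $x$, so the vanishing of its chord integral forces $\widetilde Q(u_\delta,u_\delta)(\cdot,v_*)=0$; as $u_\delta(v_*)=0$ kills the loss term, this reads
\[
\int_{\R^n}\int_{\mathbb{S}^{n-1}}\widetilde B(v_*,u,\omega)\,u_\delta(u')\,u_\delta(v')\,d\omega\,du=0,
\]
with $u',v'$ from \eqref{velocities after collision} for $v=v_*$.

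The final and most delicate step is to localize. The integrand above is nonnegative because $\widetilde B\ge 0$ and $u_\delta\ge 0$, so it vanishes for a.e.\ $(u,\omega)$. Keeping only the cross term $\phi_\delta(u'-u'_*)\phi_\delta(v'-v'_*)$, which is positive whenever $(u',v')$ is within $\delta$ of $(u'_*,v'_*)$, and using that the collision change of variables $(u,\omega)\mapsto(u',v')$ at fixed $v=v_*$ is a local diffeomorphism onto the $(2n-1)$-dimensional manifold $\{(u',v'):(u'-v_*)\cdot(v'-v_*)=0\}$ near the non-degenerate configuration $(u_*,\omega_*)$, I would conclude $\widetilde B(v_*,u,\omega)=0$ for a.e.\ $(u,\omega)$ in a neighborhood of $(u_*,\omega_*)$, hence $\widetilde B(v_*,u_*,\omega_*)=0$ by continuity. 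Since the non-degenerate configurations are dense and $\widetilde B$ is continuous, this gives $\widetilde B\equiv 0$, i.e.\ $B_1=B_2$ everywhere on $\R^n\times\R^n\times\mathbb{S}^{n-1}$.

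I expect the hard part to be this localization rather than the linearization. The two decisive points are that the monotonicity hypothesis is exactly what upgrades ``the gain-term integral vanishes'' to ``the gain-term integrand vanishes pointwise'' (without a sign condition the loss term could compensate the gain term), and that $(u,\omega)\leftrightarrow(u',v')$ is a genuine local diffeomorphism away from trivial collisions, so that concentrating the test profile near $(u'_*,v'_*)$ isolates $\widetilde B$ at the single configuration $(v_*,u_*,\omega_*)$. Additional care will be needed to realize the nearly singular velocity profiles as honest continuous transport solutions and to handle the degenerate velocity configurations by a continuity/density argument.
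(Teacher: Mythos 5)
Your proposal is correct, and its skeleton --- second-order linearization of the boundary map, reduction to vanishing chord integrals of a source that is linear in $B_1-B_2$, and the use of the monotonicity $B_1\geq B_2$ to upgrade a vanishing integral of a signed integrand to pointwise vanishing --- is the same as the paper's; but the decisive step is carried out by a genuinely different choice of test solutions. The paper fixes $v_0$, takes $V^{(1)}(v)=e^{|v-v_0|^2}$ and $V^{(2)}\equiv 1$, and shows via the collision identities that the full gain-minus-loss factor $P(v_0,u,\omega)$ is $\leq 0$, strictly negative off the thin set $N_{v_0u}$, so that $(B_1-B_2)P\equiv 0$ forces $B_1(v_0,\cdot,\cdot)=B_2(v_0,\cdot,\cdot)$. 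You instead take $x$-independent nonnegative bumps concentrated at the post-collision velocities $u_*',v_*'$ and vanishing at $v_*$, which annihilates the loss term outright and leaves a manifestly nonnegative gain integrand; this is in effect a rigorous-bump version of the delta-function construction the paper uses in Section~\ref{sec:formula} for the reconstruction formula, but, unlike that section, it needs neither the symmetry nor the evenness assumption \eqref{assumption B}, so it does prove Theorem~\ref{main thm} as stated, and it localizes the conclusion directly at each nondegenerate configuration $(v_*,u_*,\omega_*)$. Two remarks: (i) your final localization is more machinery than you need --- since the gain integrand is continuous, nonnegative, and integrates to zero, it vanishes identically, and evaluating it at $(u,\omega)=(u_*,\omega_*)$, where $u_\delta(u_*')\,u_\delta(v_*')\geq \phi_\delta(0)^2>0$, already yields $(B_1-B_2)(v_*,u_*,\omega_*)=0$; no local-diffeomorphism property of $(u,\omega)\mapsto(u',v')$ is required; (ii) the twice-differentiable dependence on $\epsilon$ that you invoke is not contained in Section~\ref{sec:Preliminaries} and must be justified as in Lemmas~\ref{lemma:1st appro}, \ref{lemma:2nd appro} and \ref{lemma bdry asym}; since your one-parameter scheme is the polarization $g_1=g_2=g_0$ of the paper's two-parameter one, those lemmas apply essentially verbatim. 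A side benefit of your choice is that the test data $u_\delta$ are bounded in $v$, so the inputs $\epsilon u_\delta|_{\Gamma_-}$ genuinely lie in the small-data class of Theorem~\ref{thm:well posedness}, whereas the paper's weight $e^{|v-v_0|^2}$ grows in $v$.
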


We also have the following reconstruction formula for $B$ provided that $B$ satisfies some weak assumptions.
\begin{theorem}[Reconstruction formula]\label{thm:ReconstructionFormula}
Let $\Omega\subset\R^n$ be a bounded domain with $C^\infty$ boundary $\p\Omega$ with $n\geq 2$ and let $B\in C(\R^n\times\R^n\times\mathbb{S}^{n-1})$ satisfy \eqref{constraint B}. Suppose that $B$ is symmetric in incoming velocities and is also an even function of $\omega$, that is,
\begin{align}\label{assumption B}
   B(v,u,\omega) = B(u,v,\omega)\ \ \hbox{and}\ \ B(v,u,-\omega)=B(v,u,\omega).
\end{align} 
Then for any $(a,b,\theta)\in \mathcal{D}$ (defined in \eqref{def D}) in $\mathbb{R}^n\times\mathbb{R}^n\times\mathbb{S}^{n-1}$, we have  
	\begin{align} \label{thm_eq:SInTemrsOfB}
	&S(a,a - [(a-b)\cdot\theta]\theta, b + [(a-b)\cdot\theta]\theta)\notag \\
	&=|(a-b)\cdot\theta|^{-2} (B(a, b, \theta)+ (|a-b|^2-|(a-b)\cdot\theta|^2)^{-1} B(a, b, \widehat{P_{\theta^\perp}(a-b)}),
	\end{align}
	where we denote $\hat{z}:={z\over |z|}\in\mathbb{S}^{n-1}$ and $P_{\theta^\perp}(a-b) := (a-b)-[(a-b)\cdot\theta]\theta$, and the function $S$ (defined in \eqref{eq:Sv}) is determined by the boundary measurement $\mathcal{A}$ only.
\end{theorem}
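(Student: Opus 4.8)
The plan is to use the higher-order linearization scheme on \eqref{intro: boltzmann equ} to isolate the collision kernel $B$ in the second linearization, and then make an optimal choice of the solutions to the first linearized equation so that the resulting identity collapses into the claimed formula. First I would take a family of boundary data $g=\sum_{k=1}^{2}\epsilon_k g_k$ with small parameters $\epsilon_1,\epsilon_2$, let $F=F(\epsilon_1,\epsilon_2)$ be the corresponding solution, and differentiate. The zeroth-order solution $F(0,0)$ solves $v\cdot\nabla_x F_0 = Q(F_0,F_0)$ with zero incoming data; by the well-posedness in Section~\ref{sec:Preliminaries} and uniqueness for small data, $F_0\equiv 0$. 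The first linearization $w_k := \partial_{\epsilon_k}F|_{\epsilon=0}$ then solves the \emph{transport equation} $v\cdot\nabla_x w_k = 0$ with $w_k|_{\Gamma_-}=g_k$, because the linearized collision operator $Q(F_0,\cdot)+Q(\cdot,F_0)$ vanishes when $F_0=0$. Hence $w_k(x,v)=g_k(x-\tau_-(x,v)v,\,v)$ is completely free — it is any function constant along the flow lines $x\mapsto x+sv$ — which is exactly the "plenty of freedom" mentioned in the introduction.

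Next I would compute the mixed second derivative $u_{12}:=\partial_{\epsilon_1}\partial_{\epsilon_2}F|_{\epsilon=0}$. Differentiating the equation twice and using $F_0=0$ and $w_k$ as above, $u_{12}$ satisfies $v\cdot\nabla_x u_{12}=Q(w_1,w_2)+Q(w_2,w_1)$ with zero incoming data, so
\begin{align*}
u_{12}(x,v)=\int_{0}^{\tau_-(x,v)}\bigl[Q(w_1,w_2)+Q(w_2,w_1)\bigr](x-sv,v)\,ds.
\end{align*}
Because $\mathcal{A}_1=\mathcal{A}_2$ (respectively, because $\mathcal{A}$ is known), the outgoing trace $u_{12}|_{\Gamma_+}$ is determined; equivalently, for each light ray the integral $\int_{\text{ray}}\bigl[Q(w_1,w_2)+Q(w_2,w_1)\bigr]\,ds$ is a known (or, in Theorem~\ref{main thm}, a common) quantity. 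Writing out $Q(w_1,w_2)(x,v)=\int_{\R^n}\int_{\mathbb{S}^{n-1}}B(v,u,\omega)[w_1(x,u')w_2(x,v')-w_1(x,u)w_2(x,v)]\,d\omega\,du$, and using that $w_1,w_2$ may be chosen as approximate delta functions concentrated on prescribed rays, I would localize both the spatial integration and the $(u,\omega)$-integration. The gain term, which involves the \emph{post-collisional} velocities $u'$ and $v'$ from \eqref{velocities after collision}, is where the kernel $B(v,u,\omega)$ is genuinely probed; the symmetry assumptions \eqref{assumption B} let me symmetrize the two terms $Q(w_1,w_2)+Q(w_2,w_1)$ and combine the contributions of $\omega$ and $-\omega$. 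Carrying out the change of variables from the integration variables to $(v,u',v')$ — whose Jacobian produces the factor $|(a-b)\cdot\theta|^{-2}$ and whose degeneracy on the set where $(a-b)\cdot\theta$ is parallel/orthogonal to $a-b$ forces the definition of the admissible set $\mathcal{D}$ — should reproduce the right-hand side of \eqref{thm_eq:SInTemrsOfB}, with the second term arising from the $\omega\leftrightarrow\widehat{P_{\theta^\perp}(a-b)}$ reflected contribution and carrying the weight $(|a-b|^2-|(a-b)\cdot\theta|^2)^{-1}$. The quantity $S$ is then simply the ray-integrated, localized limit of $u_{12}|_{\Gamma_+}$, manifestly determined by $\mathcal{A}$.

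The main obstacle, I expect, is the bookkeeping of the collision geometry: one must verify that as $w_1,w_2$ concentrate on rays with directions and base points chosen to force $v, u', v'$ to take prescribed values $a,\,a-[(a-b)\cdot\theta]\theta,\,b+[(a-b)\cdot\theta]\theta$, the map $(u,\omega)\mapsto(u',v')$ is a local diffeomorphism with the stated Jacobian, and that the loss term $w_1(x,u)w_2(x,v)$ — which does \emph{not} see post-collisional velocities — either does not contribute in the localization or contributes a term one can separately evaluate and subtract. One also needs the integrability bound \eqref{constraint B} to justify the interchange of the ray integral with the $(u,\omega)$ integral and to control the error in the delta-approximation; the continuity of $B$ then upgrades the localized identity to a pointwise one on $\mathcal{D}$. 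For Theorem~\ref{main thm}, once \eqref{thm_eq:SInTemrsOfB}-type information is extracted, equality of the boundary maps gives equality of the symmetrized, weighted combinations of $B_1$ and $B_2$ on a dense set; the monotonicity hypothesis $B_1\ge B_2$ turns this linear relation — in which all coefficients are positive — into $B_1=B_2$, first on $\mathcal{D}$ and then, by continuity and the density of $\mathcal{D}$, everywhere.
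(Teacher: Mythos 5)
Your outline follows the paper's strategy (second-order linearization, transport solutions concentrated as delta functions, a change of variables plus the symmetry/evenness of $B$ to collapse the gain terms), but as written it stops exactly where the theorem's actual content begins, and the one concrete mechanism you do offer is wrong. The factor $|(a-b)\cdot\theta|^{-2}$ does \emph{not} come from the Jacobian of the change of variables $(u,v)\mapsto(u',v')$: for each fixed $\omega$ that map is an isometry (Jacobian $1$), which is precisely why the paper can pass to $(u',v')$ and reduce $I_1$, $I_2$ to integrals over $\omega\in\mathbb{S}^{n-1}$ of $B(v(u_0,v_0,\omega),u(u_0,v_0,\omega),\omega)\,\delta_{v_\ast}(v(u_0,v_0,\omega))$. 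The singular weight arises from resolving that remaining delta constraint on the sphere, i.e.\ from the Jacobian of $\omega\mapsto v(u_0,v_0,\omega)$ at its isolated zeros. Identifying those zeros is the core of the proof and is entirely deferred in your plan: one must show (Lemmas~\ref{lemma:solvability1}--\ref{lemma:uvomega}) that $v_\ast=v(u_0,v_0,\omega)$ is solvable iff the orthogonality relation \eqref{eq:rel3} holds, that the solutions are $\pm\omega_1$ with $\omega_1=\widehat{v_\ast-v_0}$ (and $\pm\omega_2$, $\omega_2=\widehat{v_\ast-u_0}\perp\omega_1$, for the swapped gain term), and that the companion post-collisional velocity is $u_0+v_0-v_\ast$; only then does the choice $v_\ast=a$, $v_0=a-[(a-b)\cdot\theta]\theta$, $u_0=b+[(a-b)\cdot\theta]\theta$ produce the two kernel arguments $B(a,b,\theta)$ and $B(a,b,\widehat{P_{\theta^\perp}(a-b)})$ with the stated weights. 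Without this collision-geometry analysis the claimed formula cannot be reached, so the "bookkeeping" you flag as the main obstacle is the proof, not a detail.

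Two smaller corrections. The loss terms require no "separate evaluation and subtraction": after inserting the velocity deltas they reduce to $\delta_{v_\ast}(v_0)$ and $\delta_{v_\ast}(u_0)$ multiplied by finite $\omega$-integrals (finite by \eqref{constraint B}), and they vanish outright because $v_\ast$, $v_0$, $u_0$ are distinct — this distinctness is exactly what the admissible set $\mathcal D$ in \eqref{def D} encodes, rather than a degeneracy of your change of variables. Also, concentrating $w_1,w_2$ on spatial rays is unnecessary and counterproductive: since $B$ is independent of $x$, the paper takes $V^{(k)}$ depending on $v$ only, so that $S$ is $x$-independent and $W(x,v)=\tau_-(x,v)S(v)$ as in \eqref{eq:WS}; this is what makes $S$ directly recoverable from $\mathcal A$ through \eqref{eq:WA}. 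If you localize in $x$ as well, the product structure $V^{(1)}(x,\cdot)V^{(2)}(x,\cdot)$ forces intersection conditions on the supports and destroys this clean ray-integral identity, adding complications the theorem does not need.
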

 
Following immediately from Theorem~\ref{thm:ReconstructionFormula}, we obtain the uniqueness result if $B$ satisfies \eqref{constraint B}-\eqref{assumption B}.
\begin{corollary}[Uniqueness: two special cases]\label{Coro}
	Suppose that two collision kernels $B_1$ and $B_2$ satisfy \eqref{constraint B} and \eqref{assumption B} and have identical boundary measurements.
	Then $B_1=B_2$ in the following two cases: 
	\begin{enumerate}
		\item  the collision kernel $B=B(v,u)$ is independent of $\omega$;
		\item  the monotonicity condition is valid, such as $B_1\geq B_2$.
	\end{enumerate} 
\end{corollary}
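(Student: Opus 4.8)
The plan is to obtain both statements as immediate consequences of the reconstruction formula \eqref{thm_eq:SInTemrsOfB}. First I would note that, since $B_1$ and $B_2$ both satisfy \eqref{constraint B}--\eqref{assumption B} and induce the same boundary operator $\mathcal{A}$, Theorem~\ref{thm:ReconstructionFormula} applies to each of them, and the left-hand side $S$ of \eqref{thm_eq:SInTemrsOfB} is determined by $\mathcal{A}$ alone; hence it is the same function for $B_1$ and for $B_2$. Subtracting the two instances of \eqref{thm_eq:SInTemrsOfB}, cancelling the common nonzero factor $|(a-b)\cdot\theta|^{-2}$, and writing $B:=B_1-B_2$, I arrive at the pointwise identity
\[
B(a,b,\theta)+\bigl(|a-b|^2-|(a-b)\cdot\theta|^2\bigr)^{-1}B\bigl(a,b,\widehat{P_{\theta^\perp}(a-b)}\bigr)=0
\qquad\text{for all }(a,b,\theta)\in\mathcal{D}.
\]
On $\mathcal{D}$ the scalar $|a-b|^2-|(a-b)\cdot\theta|^2=|P_{\theta^\perp}(a-b)|^2$ is strictly positive, so the coefficient of the second term above is a positive real number; this positivity is the only structural fact I need.

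For case (1), if the kernels are independent of $\omega$ then $B(a,b,\theta)=B\bigl(a,b,\widehat{P_{\theta^\perp}(a-b)}\bigr)=:B(a,b)$, and the displayed identity reduces to $B(a,b)\bigl(1+(|a-b|^2-|(a-b)\cdot\theta|^2)^{-1}\bigr)=0$ with the bracketed factor strictly larger than $1$, forcing $B(a,b)=0$ whenever $(a,b)$ occurs as the first pair of coordinates of a point of $\mathcal{D}$. For case (2), the hypothesis $B_1\geq B_2$ means $B\geq 0$ everywhere, so in the displayed identity both summands are nonnegative (the second because its coefficient is positive on $\mathcal{D}$); since a sum of nonnegative numbers vanishes only if each does, $B(a,b,\theta)=0$ for every $(a,b,\theta)\in\mathcal{D}$. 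In either case I would then invoke the continuity of $B_1$ and $B_2$ together with the density of $\mathcal{D}$ (resp.\ of its projection onto the incoming velocities), as guaranteed by the definition \eqref{def D}, to conclude $B_1=B_2$ on all of $\R^n\times\R^n\times\mathbb{S}^{n-1}$.

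I do not anticipate a genuine obstacle once Theorem~\ref{thm:ReconstructionFormula} is in hand: the argument is algebraic cancellation plus a sign observation. The one point deserving care is the passage from $\mathcal{D}$ to the full space, which relies on $\overline{\mathcal{D}}=\R^n\times\R^n\times\mathbb{S}^{n-1}$ (and, for case (1), on the density of the closure of the $(a,b)$-projection of $\mathcal{D}$); this should follow directly from the explicit description \eqref{def D}. I would also remark that case (2) is already subsumed by Theorem~\ref{main thm}, so the genuinely new content of the corollary is case (1), where no monotonicity is assumed.
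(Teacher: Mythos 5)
Your proposal is correct and follows essentially the same route as the paper: subtract the two instances of the reconstruction formula \eqref{thm_eq:SInTemrsOfB}, use that in case (1) both terms coincide so the (strictly positive) combined coefficient forces $B_1-B_2=0$, and in case (2) that a sum of nonnegative terms with positive coefficients vanishes only if each term does, then pass from $\mathcal{D}$ to all of $\R^n\times\R^n\times\mathbb{S}^{n-1}$ by continuity. Your explicit treatment of the density of $\mathcal{D}$ (and of its $(a,b)$-projection) is merely a spelled-out version of what the paper leaves implicit, so no substantive difference.
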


\begin{remark}
Compared to Theorem~\ref{main thm}, the uniqueness result of Corollary~\ref{Coro} is constructive yet replies on an additional assumption \eqref{assumption B} since it is a direct consequence from the reconstruction formula stated in Theorem~\ref{thm:ReconstructionFormula}. 
\end{remark} 
 
We note that Theorem~\ref{main thm} illustrates the uniqueness result is valid if $B$ satisfies the monotonicity condition and its proof in Section~\ref{sec:recovery} replies on the suitable chosen Gaussian-like solutions to the first linearized equation of \eqref{intro: boltzmann equ}. To demonstrate Theorem~\ref{thm:ReconstructionFormula}, the methodology is based on a similar strategy in the study of the RTE by applying the solution having the boundary data that are only concentrated on the incoming direction. Thus, the information of the kernel $B$ can be carried out from the propagation of these particles.

\subsection{Outline.} The paper is organized as follows. Section~\ref{sec:Preliminaries} is devoted to prove fundamental results, including the well-posedness of \eqref{intro: boltzmann equ}. They will play an important role in the study of the determination of the kernel. In Section~\ref{sec:recovery}, we detail the analysis of the higher-order linearization scheme and provide the proof of Theorem~\ref{main thm}. Furthermore, the reconstruction formula in Theorem~\ref{thm:ReconstructionFormula} is presented and proved in Section~\ref{sec:formula} as well as the uniqueness results in two special cases are discussed under the same hypothesis.

\section{Preliminaries}\label{sec:Preliminaries}
In this section, we introduce the results that are essential for the investigation of the proposed inverse problem for \eqref{intro: boltzmann equ}. The main goal here is to establish the well-posedness for the boundary value problem \eqref{intro: boltzmann equ} with small incoming boundary data.

We first discuss the following lemma as preparation for the well-posedness result.
 
\begin{lemma}\label{Maximum principle}
Suppose that $\sigma\in L^\infty(\Omega)$ satisfies $\sigma\geq\sigma_0>0$ for positive constant $\sigma_0$. For $f\in C(\Omega\times\R^n)$ and $g\in C(\Gamma_-)$, the solution $F$ to 
	\begin{align}\label{ }
	\left\{ \begin{array}{ll} 
	v\cdot \nabla_x F +\sigma F= f& \hbox{in } \Omega\times\R^n,\\
	F=g &  \hbox{on }\Gamma_-,\\
	\end{array}\right. 
	\end{align}
has the form 
$$
   F(x,v) = e^{-\int^{\tau_-(x,v)}_0 \sigma(x-sv)ds}g(x-\tau_-(x,v)v, v)+\int^{\tau_-(x,v)}_0 e^{-\int^{s}_0 \sigma(x-\eta v)d\eta}  f(x-sv, v)\,ds
$$
and satisfies the estimate
\begin{align}\label{estimate F}
    \|F\|_{C(\Omega\times\R^n)} \leq  \|g\|_{C(\Gamma_- )} + C \|f\|_{C(\Omega\times\R^n)},
\end{align}
where $C$ depends only on $\sigma_0$.
\end{lemma}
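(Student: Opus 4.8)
The plan is to solve the transport equation along characteristics. The characteristic curves of the operator $v\cdot\nabla_x$ are straight lines $s\mapsto x-sv$, and along such a line the PDE becomes a linear first-order ODE in $s$ with the integrating factor $e^{-\int_0^s\sigma(x-\eta v)\,d\eta}$. First I would fix $(x,v)\in\Omega\times\R^n$ with $v\neq 0$ and set $\phi(s):=F(x-sv,v)$ for $s\in[0,\tau_-(x,v)]$; then $\frac{d}{ds}\phi(s)=-v\cdot\nabla_x F(x-sv,v)=\sigma(x-sv)\phi(s)-f(x-sv,v)$. Multiplying by the integrating factor and integrating from $0$ to $\tau_-(x,v)$, using that $x-\tau_-(x,v)v\in\p\Omega$ lies in $\Gamma_-$ so that $\phi(\tau_-(x,v))=g(x-\tau_-(x,v)v,v)$, yields exactly the stated representation formula for $F(x,v)$. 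One should also record that this $F$ is continuous on $\Omega\times\R^n$: the integrand is continuous in all variables, and $\tau_-(x,v)$ is continuous on $\Omega\times(\R^n\setminus\{0\})$ because $\p\Omega$ is $C^\infty$ (hence the boundary is non-characteristic and the exit time depends continuously on the base point and direction); the case $v=0$ is handled separately since then the equation reduces to $\sigma F=f$.

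For the estimate \eqref{estimate F}, I would bound the two terms in the representation formula directly. Since $\sigma\geq\sigma_0>0$, the exponential weight in the boundary term satisfies $e^{-\int_0^{\tau_-(x,v)}\sigma(x-sv)\,ds}\leq 1$, so that term is bounded by $\|g\|_{C(\Gamma_-)}$. For the source term, $e^{-\int_0^s\sigma(x-\eta v)\,d\eta}\leq e^{-\sigma_0 s}$, and therefore
\begin{align*}
\left|\int_0^{\tau_-(x,v)} e^{-\int_0^s\sigma(x-\eta v)\,d\eta} f(x-sv,v)\,ds\right|
\leq \|f\|_{C(\Omega\times\R^n)}\int_0^{\tau_-(x,v)} e^{-\sigma_0 s}\,ds
\leq \frac{1}{\sigma_0}\|f\|_{C(\Omega\times\R^n)}.
\end{align*}
Taking the supremum over $(x,v)$ gives \eqref{estimate F} with $C=1/\sigma_0$.

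There is no serious obstacle here; this is a standard method-of-characteristics computation. The only points requiring a little care are the regularity bookkeeping — verifying that the formula indeed defines a $C(\Omega\times\R^n)$ function, which hinges on continuity of $\tau_-$ and on the fact that the boundary term is evaluated at a point of $\Gamma_-$ where $g$ is defined — and making sure the differentiation along characteristics is justified (one can instead verify directly that the proposed formula solves the equation, differentiating under the integral sign, which sidesteps any a priori smoothness assumption on $F$). I would present it by defining $F$ through the formula, checking it solves the boundary value problem, and then reading off the bound as above.
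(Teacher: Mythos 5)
Your proposal is correct and follows essentially the same route as the paper: the paper simply asserts that the representation formula ``can be readily verified'' and then bounds the boundary term by $\|g\|_{C(\Gamma_-)}$ and the source term by $\|f\|_{C(\Omega\times\R^n)}\int_0^{\tau_-(x,v)}e^{-\sigma_0 s}\,ds\leq \sigma_0^{-1}\|f\|_{C(\Omega\times\R^n)}$, exactly as you do. Your extra care in deriving the formula along characteristics and in checking continuity of $\tau_-$ is a harmless elaboration of the step the paper leaves implicit.
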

\begin{proof}
It can be readily verified that $F(x,v)$ defined above is indeed a solution. Moreover, from the representation of $F$ and $\sigma\geq \sigma_0>0$, we have
\begin{align*} 
|F(x,v)| &= | e^{-\int^{\tau_-(x,v)}_0 \sigma(x-sv)ds}g(x-\tau_-(x,v)v, v)+\int^{\tau_-(x,v)}_0 e^{-\int^{s}_0 \sigma(x-\eta v)d\eta}  f(x-sv, v)\,ds |\\
&\leq \|g\|_{C(\Gamma_-)} + \|f\|_{C(\Omega\times\R^n)}\int^{\tau_-(x,v)}_0 e^{-\sigma_0 s}\,ds \\
&\leq \|g\|_{C(\Gamma_-)} + { 1\over \sigma_0}  \|f\|_{C(\Omega\times\R^n)} \LC 1- e^{-\sigma_0 \tau_-(x,v)}  \RC
\end{align*}
for any $(x,v)\in \Omega\times\R^n$. Thus, the estimate \eqref{estimate F} holds. 
\end{proof}

\begin{remark}\label{remark}
	We note that when $\sigma=0$, the solution takes the form $$F(x,v)=g(x-\tau_-(x,v)v, v)+\int^{\tau_-(x,v)}_0  f(x-sv, v)\,ds.$$ Then it is clear that 
	\begin{align}\label{estimate F 2}
	|F(x,v)| \leq  \|g\|_{C(\Gamma_- )} +  |\int^{\tau_-(x,v)}_0 f(x-sv,v)\,ds | \ \ \hbox{for all }x\in\Omega,\ v\in\R^n .
	\end{align}

\end{remark}

\subsection{Well-posedness} 
We consider the in-flow boundary condition for the Boltzmann equation
\begin{align}\label{Boltzmann equ}
	\left\{ \begin{array}{ll} 
    v\cdot \nabla_x F = Q(F,F)& \hbox{in } \Omega\times\R^n,\\
	F=g  &  \hbox{on }\Gamma_-,\\
	\end{array}\right. 
\end{align}
where the collision operator $Q$ is defined as in~\eqref{def:collision}.

We show the boundary value problem for \eqref{Boltzmann equ} is well-posed for small boundary data.
\begin{theorem}[Well-posedness of the Boltzmann equation]\label{thm:well posedness}
Let $\Omega\subset \R^n$, $n\geq2$ be a bounded domain with $C^\infty$ boundary $\p\Omega$.
Suppose that $B$ satisfies \eqref{constraint B}. 
Then there exists $\varepsilon>0$ such that when
\begin{align}\label{small boundary}
    g\in \mathcal{X}:=\{g\in C(\Gamma_-):\ \|g\|_{C(\Gamma_-)}\leq \varepsilon\},
\end{align}
the boundary value problem \eqref{Boltzmann equ} has a unique solution $F$. Moreover, there exists a constant $C>0$, independent of $g$, such that 
$$
    \|F\|_{C(\Omega\times\R^n)} \leq C \|g\|_{C(\Gamma_-) }.
$$
\end{theorem}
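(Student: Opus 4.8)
The plan is to prove Theorem~\ref{thm:well posedness} by a contraction mapping (Banach fixed point) argument in the Banach space $C(\overline\Omega\times\R^n)$ (or a suitable closed subset thereof), using Remark~\ref{remark} with $\sigma=0$ to convert the transport equation into an integral equation. First I would write the boundary value problem \eqref{Boltzmann equ} in mild/integral form: by Lemma~\ref{Maximum principle} with $\sigma=0$, a continuous solution $F$ must satisfy
\begin{align*}
F(x,v) = g(x-\tau_-(x,v)v,v) + \int_0^{\tau_-(x,v)} Q(F,F)(x-sv,v)\,ds =: \mathcal{T}(F)(x,v).
\end{align*}
So it suffices to show $\mathcal{T}$ has a unique fixed point in the closed ball $\overline{B}_R := \{F\in C(\overline\Omega\times\R^n): \|F\|_{C(\overline\Omega\times\R^n)}\le R\}$ for an appropriate radius $R$ (to be taken comparable to $\varepsilon$).

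The key estimate is the bound on the collision operator. For $H_1,H_2\in C(\overline\Omega\times\R^n)$, from the definition \eqref{def:collision} and the constraint \eqref{constraint B},
\begin{align*}
\left|\int_0^{\tau_-(x,v)} Q(H_1,H_2)(x-sv,v)\,ds\right| \le 2\|H_1\|_{C}\,\|H_2\|_{C}\;\tau_-(x,v)\int_{\R^n}\int_{\mathbb{S}^{n-1}}|B(v,u,\omega)|\,d\omega\,du \le 2M\,\|H_1\|_C\|H_2\|_C,
\end{align*}
where I use that $|H_i(x-sv,u')|, |H_i(x-sv,v')|, |H_i(x-sv,u)|, |H_i(x-sv,v)|$ are all bounded by $\|H_i\|_C$ (note $x-sv\in\overline\Omega$ for $s\in[0,\tau_-(x,v)]$, and the post-collision velocities $u',v'$ from \eqref{velocities after collision} are again in $\R^n$, so the sup-norm over $\overline\Omega\times\R^n$ controls them). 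Combining with $\|g(\cdot-\tau_-(\cdot)v,v)\|_C = \|g\|_{C(\Gamma_-)}$ gives $\|\mathcal{T}(F)\|_C \le \|g\|_{C(\Gamma_-)} + 2M\|F\|_C^2$. Hence if $\|g\|_{C(\Gamma_-)}\le\varepsilon$ and $F\in\overline B_R$ we get $\|\mathcal{T}(F)\|_C\le \varepsilon + 2MR^2$; choosing, say, $R = 2\varepsilon$ and then $\varepsilon$ small enough that $2M(2\varepsilon)^2\le \varepsilon$ (i.e.\ $\varepsilon\le 1/(8M)$) ensures $\mathcal{T}$ maps $\overline B_R$ into itself. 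For the contraction property, using the bilinearity of $Q$ I write $Q(F_1,F_1)-Q(F_2,F_2) = Q(F_1-F_2,F_1)+Q(F_2,F_1-F_2)$, so the same estimate yields
\begin{align*}
\|\mathcal{T}(F_1)-\mathcal{T}(F_2)\|_C \le 2M\,(\|F_1\|_C+\|F_2\|_C)\,\|F_1-F_2\|_C \le 8M\varepsilon\,\|F_1-F_2\|_C,
\end{align*}
which is a strict contraction once $\varepsilon < 1/(8M)$. Then the Banach fixed point theorem gives a unique $F\in\overline B_R$ with $\mathcal{T}(F)=F$, and the bound $\|F\|_C\le \varepsilon+2MR^2\le 2\varepsilon$ can be sharpened: from $\|F\|_C\le\|g\|_{C(\Gamma_-)}+2M\|F\|_C^2$ and $\|F\|_C\le R$ small, one deduces $\|F\|_C\le (1-2MR)^{-1}\|g\|_{C(\Gamma_-)} \le C\|g\|_{C(\Gamma_-)}$, giving the claimed Lipschitz bound with $C$ independent of $g$.

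Two points need a little care, and I expect the first to be the main obstacle. (i) One must check that $\mathcal{T}(F)$ is genuinely continuous on $\overline\Omega\times\R^n$ when $F$ is — i.e.\ that $\mathcal{T}$ really maps $C(\overline\Omega\times\R^n)$ into itself. This requires continuity of $(x,v)\mapsto\tau_-(x,v)$ and of the various compositions; since $\p\Omega$ is $C^\infty$ and the velocity integral is uniformly controlled by \eqref{constraint B} (allowing a dominated-convergence argument for the $du\,d\omega$ integral and for the $s$-integral), this goes through, though glancing behavior of trajectories tangent to $\p\Omega$ is the delicate spot; alternatively one restricts attention to $v$ bounded away from $0$ or works on a slightly larger domain, and this is presumably handled in the preliminaries. (ii) Uniqueness should be stated within the small-ball class (any two solutions with small norm coincide): this is immediate from the contraction estimate since any solution is a fixed point of $\mathcal{T}$ and the contraction constant is $<1$ on the ball. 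Finally, one remarks that the fixed point of the integral equation is a genuine (mild, and given smoothness of data, classical) solution of \eqref{Boltzmann equ}, which is exactly the content of Lemma~\ref{Maximum principle}/Remark~\ref{remark} read in reverse.
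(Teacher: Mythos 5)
Your proof is correct and takes essentially the same route as the paper: a Banach fixed-point argument on the mild (integral-along-characteristics) formulation, with the collision term controlled by the bilinearity of $Q$ together with the constraint \eqref{constraint B}, and the same smallness bookkeeping yielding both the self-map and contraction properties and the final bound $\|F\|_{C(\Omega\times\R^n)}\leq C\|g\|_{C(\Gamma_-)}$. The only cosmetic difference is that the paper first splits off the free-transport solution $F_0$ and contracts on the correction $G=F-F_0$ with zero incoming data, whereas you contract directly on $F$; the estimates are identical, and your point (i) about continuity across glancing trajectories is treated at the same (informal) level in the paper as in your sketch.
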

\begin{proof}
We utilize the contraction mapping principle to show the existence of solution to \eqref{Boltzmann equ}.
	
To this end, we first take any $g\in C(\Gamma_-)$ satisfying $\|g\|_{C(\Gamma_-)}\leq \varepsilon$ with $\varepsilon$ to be determined later, then there exists a unique solution $F_0$ to the equation
\begin{align}\label{Boltzmann equ 0}
\left\{ \begin{array}{ll} 
v\cdot \nabla_x F_0  = 0 & \hbox{in } \Omega\times\R^n,\\
F_0=g &  \hbox{on }\Gamma_-,\\
\end{array}\right. 
\end{align}
and $F_0$ satisfies
\begin{align}\label{estimate F0}
   \|F_0\|_{C(\Omega\times\R^n)}\leq  \|g\|_{C(\Gamma_-)}\leq \varepsilon.
\end{align}

Second, if $F$ is the solution to \eqref{Boltzmann equ}, then we have that $G:=F-F_0$ satisfies
\begin{align}\label{Boltzmann equ 1}
\left\{ \begin{array}{ll} 
v\cdot \nabla_x G = Q(F_0+G,F_0+G)=:\mathcal{F}(G) & \hbox{in } \Omega\times\R^n,\\
G=0 &  \hbox{on }\Gamma_-.\\
\end{array}\right. 
\end{align}
We denote by $\mathcal{L}^{-1}$ the solution operator to \eqref{Boltzmann equ 1} and, moreover, from \eqref{estimate F 2}, it satisfies
\begin{align}\label{estimate L}
|\mathcal{L}^{-1}(\mathcal{F}(G)) (x,v)|\leq C \tau_-(x,v)\|\mathcal{F}(G)(\cdot,v)\|_{C(\Omega)} 
\end{align}
for all $x\in\Omega$ and $v\in \R^n$.

Now we will show that $\mathcal{L}^{-1}\circ \mathcal{F}$ is a contraction map on a suitable subset of $C(\Omega\times\R^n)$.
We first define the subspace $\mathfrak{X}$ of $C(\Omega\times\R^n)$ by
$$
    \mathfrak{X}=\{\varphi\in C(\Omega\times\R^n):\ \varphi|_{\Gamma_-}=0,\ \|\varphi\|_{C(\Omega\times\R^n)}\leq \delta \}
$$
with some constant $\delta>0$ to be determined later. 
To simplify the notation, we further define an operator $\mathcal{M}$ on $\mathfrak{X}$ by
$$
    \mathcal{M} (\varphi) = (\mathcal{L}^{-1} \circ\mathcal{F})(\varphi)
$$
for any $\varphi\in \mathfrak{X}$.
From the direct computations, \eqref{estimate F0}, and \eqref{estimate L}, we obtain
\begin{align*}
&|\mathcal{M}(\varphi)(x,v)|\\
&= |\mathcal{L}^{-1} (Q(F_0+\varphi,F_0+\varphi))(x,v)|\\
&\leq C \tau_-(x,v) \|Q(F_0+\varphi,F_0+\varphi)(\cdot,v)\|_{C(\Omega)}\\
&= C\tau_-(x,v)\|\int_{\R^n}\int_{\mathbb{S}^{n-1}} B(v,u,w) [(F_0+\varphi)(x,u')(F_0+\varphi)(x,v') \\
&\hskip5cm - (F_0+\varphi)(x,u)(F_0+\varphi)(x,v)]\,dwdu \|_{C(\Omega)}\\
&\leq C \tau_-(x,v)\LC\int_{\R^n}\int_{\mathbb{S}^{n-1}}|B(v,u,w)|\, dwdu\RC (\varepsilon +\delta )^2\\
&\leq CM(\varepsilon+\delta)^2\ \ \hbox{for all }(x,v)\in \Omega\times \R^n,
\end{align*}
where the last inequality is due to \eqref{constraint B}.
Thus, we have
$$
\|\mathcal{M}(\varphi)\|_{C(\Omega\times\R^n)}  \leq CM(\varepsilon+\delta)^2.
$$
Moreover, for any $\varphi_1,\varphi_2\in \mathfrak{X}$, we also estimate  
\begin{align*}
&|\mathcal{M}(\varphi_1)(x,v) - M(\varphi_2) (x,v)| \\
&\leq \tau_-(x,v)|(Q(F_0+\varphi_1,F_0+\varphi_1)(x,v)-Q(F_0+\varphi_2,F_0+\varphi_2)(x,v)) | \\
&\leq C \tau_-(x,v)\int_{\R^n}\int_{\mathbb{S}^{n-1}}B |(v,u,w)|\, dwdu \Big(  4\|F\|_{C(\Omega\times\R^n)}\|\varphi_1-\varphi_2\|_{C(\Omega\times\R^n)} \\
&\quad + 2\|\varphi_1\|_{C(\Omega\times\R^n)}\|\varphi_1-\varphi_2\|_{C(\Omega\times\R^n)} + 2\|\varphi_2\|_{C(\Omega\times\R^n)} \|\varphi_1-\varphi_2\|_{C(\Omega\times\R^n)} \Big)\\
&\leq CM (4\varepsilon +4\delta )\|\varphi_1-\varphi_2\|_{C(\Omega\times\R^n)}\ \ \hbox{for all }(x,v)\in \Omega\times \R^n .
\end{align*}
If we choose $1>\varepsilon>0$ and $\delta>0$ such that $\delta<\varepsilon$,
$$
    C M  (\varepsilon+\delta)^2 \leq \delta,  
$$
and 
$$
 C M (4\varepsilon +4\delta )<1,
$$
then this leads to that $\mathcal{M}$ maps $\mathfrak{X}$ into itself and, moreover, 	is a contraction map on $\mathfrak{X}$.
By the contraction mapping principle, there exists a unique fixed point $\hat{F}\in \mathfrak{X}$ of $\mathcal{M}$ such that 
$\hat{F}$ is the solution of \eqref{Boltzmann equ 1}. In particular, from \eqref{estimate F0} and \eqref{estimate L}, one can derive that
\begin{align*} 
   \|\hat{F}\|_{C(\Omega\times\R^n)} &\leq CM ( \|g\|^2_{C(\Gamma_-)} + 2\|g\|_{C(\Gamma_-)}\|\hat{F}\|_{C(\Omega\times\R^n)} + \|\hat{F}\|^2_{C(\Omega\times\R^n)})\\
   &\leq CM\varepsilon \|g\|_{C(\Gamma_-)} +CM(2\varepsilon+\delta )\|\hat{F}\|_{C(\Omega\times\R^n)}.
\end{align*} 
We further require that $\varepsilon$ and $\delta$ satisfy $2\varepsilon+\delta \leq \gamma <1$ for some constant $\gamma$, we obtain
\begin{align*} 
\|\hat{F}\|_{C(\Omega\times\R^n)} \leq C\|g\|_{C(\Gamma_-)} .
\end{align*} 

Finally, we conclude that $F=F_0+\hat{F}$ is a solution of \eqref{Boltzmann equ} and satisfies the estimate
\begin{align*} 
    \|F\|_{C(\Omega\times \R^n)}\leq \|F_0\|_{C(\Omega\times \R^n)}+\|\hat{F}\|_{C(\Omega\times \R^n)}\leq C\|g\|_{C(\Gamma_-)} .
\end{align*}
This completes the proof.
\end{proof}

\section{Determination of the collision kernel}\label{sec:recovery} 
In this section, we will first perform the higher order linearization to the nonlinear Boltzmann equation. Under suitable constraints on the kernel, we will be able to uniquely determine the collision kernel from the boundary data.

\subsection{Linearization}
Since the nonlinearity in \eqref{intro: boltzmann equ} is quadratic-like, it is sufficiently to take parameters $\varepsilon=(\varepsilon_1,\varepsilon_2)$. For sufficiently small constants $\varepsilon_1,\,\varepsilon_2>0$ and $g_1,\,g_2\in C(\Gamma_-)$, by Theorem~\ref{thm:well posedness}, there exists a unique solution $F=F(x,v;\varepsilon)$ of the boundary value problem
\begin{align}\label{Boltzmann equ small data}
\left\{ \begin{array}{ll} 
v\cdot \nabla_x F = Q(F,F)& \hbox{in } \Omega\times\R^n,\\
F=\varepsilon_1 g_1+\varepsilon_2 g_2 &  \hbox{on }\Gamma_-,\\
\end{array}\right. 
\end{align}
and, specifically, the solution satisfies
$$
\|F\|_{C(\Omega\times\R^n)}\leq C \varepsilon_1\|g_1\|_{C(\Gamma_-)}+C\varepsilon_2\|g_2\|_{C(\Gamma_-)}.
$$
Next, let $V^{(k)}$ for $k=1,2$ be the solution of 
\begin{align}\label{Boltzmann equ V}
\left\{ \begin{array}{ll} 
v\cdot \nabla_x V^{(k)}= 0 & \hbox{in } \Omega\times\R^n,\\
V^{(k)}= g_k &  \hbox{on }\Gamma_-,\\
\end{array}\right. 
\end{align}
and then it satisfies
$$
\|V^{(k)}\|_{C(\Omega\times\R^n)}\leq C  \|g_k\|_{C(\Gamma_-)}.
$$
Lastly, we consider $W$ to be the solution to the boundary value problem
\begin{align}\label{Boltzmann equ linear W}
\left\{ \begin{array}{ll} 
v\cdot \nabla_x W  = S(x,v)  & \hbox{in } \Omega\times\R^n,\\
W= 0& \hbox{on }\Gamma_-,\\
\end{array}\right. 
\end{align}
where the function $S$ is denoted by
\begin{align}\label{eq:S}
S(x,v)&:=\int_{\R^n}\int_{\mathbb{S}^{n-1}} B(v,u,\omega) [V^{(1)}(x,v')V^{(2)}(x,u')+V^{(1)}(x,u')V^{(2)}(x,v') \nonumber \\
&\hskip4cm - V^{(1)}(x,u)V^{(2)}(x,v)-V^{(1)}(x,v)V^{(2)}(x,u)]\,d\omega du.  
\end{align}

In the following lemma, we show that the quotient $F/\varepsilon_k$ converges to $V^{(k)}$ in Lemma~\ref{lemma:1st appro} as well as we justify the approximation of the second derivatives of $F$ with respect to $\varepsilon$ in Lemma~\ref{lemma:2nd appro}.

Before starting the lemma, we denote the following functions:
\begin{align}\label{notation F}
F(x,v)=F(x,v;\varepsilon),\ \  
F^{(1)}(x,v)= F(x,v; \varepsilon_1,0), \ \
F^{(2)}(x,v)= F(x,v;0,\varepsilon_2). 
\end{align}
\begin{lemma}\label{lemma:1st appro}
Suppose that the assumptions in Theorem~\ref{thm:well posedness} hold, then we get
\begin{align}\label{asym 1st}
	\lim_{\varepsilon_1\rightarrow 0 }\|\varepsilon_1^{-1}F^{(1)} - V^{(1)}\|_{C(\Omega\times\R^n)} =0,
\end{align}
\begin{align}\label{asym 1st 0}	
	\lim_{\varepsilon_2\rightarrow 0 }\|\varepsilon_2^{-1}F^{(2)} - V^{(2)}\|_{C(\Omega\times\R^n)} =0.
\end{align}
Similarly, we also have
\begin{align}\label{asym 1st 2}
\lim_{\varepsilon \rightarrow 0 }\|\varepsilon_1^{-1}(F -F^{(2)} ) - V^{(1)}\|_{C(\Omega\times\R^n)} =0,
\end{align}
and 
\begin{align}\label{asym 2nd 2}
\lim_{\varepsilon \rightarrow 0 }\|\varepsilon_2^{-1}(F -F^{(1)} )- V^{(2)}\|_{C(\Omega\times\R^n)} =0.
\end{align}
\end{lemma}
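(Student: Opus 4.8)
The plan is to prove the four limits by exploiting the representation formulas from Lemma~\ref{Maximum principle} (with $\sigma=0$) together with the contraction-type estimates established in the proof of Theorem~\ref{thm:well posedness}. First I would observe that by the uniqueness statement in Theorem~\ref{thm:well posedness}, $F^{(1)}=F(\cdot,\cdot;\varepsilon_1,0)$ is exactly the solution with boundary data $\varepsilon_1 g_1$, so writing $F^{(1)}=\varepsilon_1 V^{(1)}+R_1$ where $V^{(1)}$ solves the transport equation \eqref{Boltzmann equ V} with data $g_1$, the remainder $R_1$ vanishes on $\Gamma_-$ and satisfies $v\cdot\nabla_x R_1 = Q(F^{(1)},F^{(1)})$. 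Applying the integral representation \eqref{estimate F 2} gives the pointwise bound $|R_1(x,v)|\le \tau_-(x,v)\,\|Q(F^{(1)},F^{(1)})(\cdot,v)\|_{C(\Omega)}$, and then the kernel bound \eqref{constraint B} together with the solution estimate $\|F^{(1)}\|_{C(\Omega\times\R^n)}\le C\varepsilon_1\|g_1\|_{C(\Gamma_-)}$ yields $\|R_1\|_{C(\Omega\times\R^n)}\le CM\varepsilon_1^2\|g_1\|^2_{C(\Gamma_-)}$. Dividing by $\varepsilon_1$ shows $\|\varepsilon_1^{-1}F^{(1)}-V^{(1)}\|_{C(\Omega\times\R^n)}\le CM\varepsilon_1\|g_1\|^2_{C(\Gamma_-)}\to 0$, which is \eqref{asym 1st}; the estimate \eqref{asym 1st 0} is identical with the roles of the parameters swapped.

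For the mixed limits \eqref{asym 1st 2} and \eqref{asym 2nd 2}, the idea is the same but the bookkeeping is slightly heavier. I would set $G := F - F^{(2)}$, note that $G$ vanishes on $\Gamma_-$ (since both $F$ and $F^{(2)}$ have data supported by the $\varepsilon_1 g_1$ part only when restricted appropriately, and the difference of the boundary data is $\varepsilon_1 g_1$), so in fact $G=\varepsilon_1 V^{(1)}+R$ on $\Gamma_-$ with $R|_{\Gamma_-}=0$. Bilinearity of $Q$ gives $v\cdot\nabla_x G = Q(F,F)-Q(F^{(2)},F^{(2)}) = Q(F+F^{(2)}, G)$ (using $Q(a,a)-Q(b,b)=Q(a+b,a-b)$ up to the symmetrization built into the definition — here one should be slightly careful and instead write it as $Q(F,G)+Q(G,F^{(2)})$, which is literally correct from \eqref{def:collision} by splitting each product). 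Then $R$ satisfies $v\cdot\nabla_x R = Q(F,G)+Q(G,F^{(2)})$ with zero incoming data, and \eqref{estimate F 2} plus \eqref{constraint B} give $\|R\|_{C}\le CM(\|F\|_C+\|F^{(2)}\|_C)\|G\|_C$. Since $\|F\|_C,\|F^{(2)}\|_C,\|G\|_C$ are all $O(\varepsilon_1+\varepsilon_2)$ (indeed $\|G\|_C\le \|F\|_C+\|F^{(2)}\|_C\le C(\varepsilon_1+\varepsilon_2)$), we get $\|R\|_C = O((\varepsilon_1+\varepsilon_2)^2)$, and dividing by $\varepsilon_1$ and letting $\varepsilon=(\varepsilon_1,\varepsilon_2)\to 0$ yields \eqref{asym 1st 2}; \eqref{asym 2nd 2} is symmetric.

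The main obstacle, such as it is, is purely notational rather than conceptual: one must track carefully that the remainder terms really do carry an \emph{extra} power of the small parameters beyond the linear term, which requires using the quadratic nature of $Q$ and the fact that each factor in $Q$ is itself $O(\varepsilon)$. A secondary point requiring care is the decomposition $Q(F,F)-Q(F^{(2)},F^{(2)})=Q(F,G)+Q(G,F^{(2)})$ for the mixed limits — this is immediate from the bilinear form of \eqref{def:collision} but must be written out so that the kernel bound \eqref{constraint B} can be applied termwise. Once these are in place, all four statements follow from the same template: write the solution as (linear term) $+$ (remainder), apply the transport representation \eqref{estimate F 2}, bound the source by \eqref{constraint B} and the a priori solution estimates, and divide by the appropriate parameter. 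No compactness or delicate limiting argument is needed; the convergence is in fact at a linear rate in $\varepsilon$.
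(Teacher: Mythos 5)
Your treatment of the first two limits is correct and is essentially the paper's argument (the paper works with $\varepsilon_1^{-1}F(\cdot,\cdot;\varepsilon_1,\varepsilon_2)-V^{(1)}$ and then sends $\varepsilon_2\to 0$; working directly with $F^{(1)}$ as you do is equivalent and slightly cleaner). The remainder estimate $\|R_1\|\le CM\varepsilon_1^2\|g_1\|^2_{C(\Gamma_-)}$ and division by $\varepsilon_1$ is exactly the right mechanism. (Minor slip: in the second half you first assert that $G=F-F^{(2)}$ ``vanishes on $\Gamma_-$''; it does not --- its incoming data is $\varepsilon_1 g_1$ --- but the decomposition $G=\varepsilon_1 V^{(1)}+R$ with $R|_{\Gamma_-}=0$ that you actually use is the correct one, as is the bilinear splitting $Q(F,F)-Q(F^{(2)},F^{(2)})=Q(F,G)+Q(G,F^{(2)})$.)

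There is, however, a genuine gap in your proof of the mixed limits \eqref{asym 1st 2} and \eqref{asym 2nd 2}. You bound $\|G\|_{C(\Omega\times\R^n)}\le\|F\|+\|F^{(2)}\|\le C(\varepsilon_1+\varepsilon_2)$ by the triangle inequality, conclude $\|R\|=O((\varepsilon_1+\varepsilon_2)^2)$, and then divide by $\varepsilon_1$. But $\varepsilon_1^{-1}(\varepsilon_1+\varepsilon_2)^2$ contains the term $\varepsilon_2^2/\varepsilon_1$, which does \emph{not} tend to zero as $(\varepsilon_1,\varepsilon_2)\to(0,0)$ jointly: along $\varepsilon_2=\sqrt{\varepsilon_1}$ it equals $1$, and along $\varepsilon_2=\varepsilon_1^{1/4}$ it blows up. Since the limit in \eqref{asym 1st 2} is over $\varepsilon=(\varepsilon_1,\varepsilon_2)\to 0$ with no constraint on the ratio, the crude $O(\varepsilon_1+\varepsilon_2)$ bound on $\|G\|$ is not enough; you need the sharper bound $\|G\|=O(\varepsilon_1)$ uniformly in $\varepsilon_2$. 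This is obtainable within your own framework: apply the transport representation to $G$ itself (incoming data $\varepsilon_1 g_1$, source $Q(F,G)+Q(G,F^{(2)})$) to get $\|G\|\le \varepsilon_1\|g_1\|_{C(\Gamma_-)}+CM(\varepsilon_1+\varepsilon_2)\|G\|$ and absorb the last term for small $\varepsilon$, giving $\|G\|\le C\varepsilon_1$; then $\|R\|\le CM(\varepsilon_1+\varepsilon_2)\,\varepsilon_1$ and division by $\varepsilon_1$ yields $O(\varepsilon_1+\varepsilon_2)\to 0$. The paper achieves the same thing by an absorption argument performed directly at the level of $\varepsilon_1^{-1}(F-F^{(2)})-V^{(1)}$: it bounds $\varepsilon_1^{-1}\|F-F^{(2)}\|$ by $\|\varepsilon_1^{-1}(F-F^{(2)})-V^{(1)}\|+\|V^{(1)}\|$ and absorbs the first term into the left-hand side, arriving at
$(1-C\varepsilon_1-C\varepsilon_2)\|\varepsilon_1^{-1}(F-F^{(2)})-V^{(1)}\|\le (C\varepsilon_1+C\varepsilon_2)\|V^{(1)}\|$. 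With this bootstrap inserted, your argument is complete; without it, the final step fails.
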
 
\begin{proof}
	We first consider the difference of \eqref{Boltzmann equ small data} and \eqref{Boltzmann equ V} for $k=1$, then we have
	\begin{align}\label{lemma: Boltzmann equ small data}
	\left\{ \begin{array}{ll} 
	v\cdot \nabla_x (\varepsilon_1^{-1} F-V^{(1)}) = \varepsilon_1^{-1}Q(F,F)& \hbox{in } \Omega\times\R^n,\\
	\varepsilon_1^{-1}F-V^{(1)}= \varepsilon_1^{-1}\varepsilon_2 g_2 &  \hbox{on }\Gamma_-.\\
	\end{array}\right. 
	\end{align}
	By Remark~\ref{remark} and \eqref{constraint B}, we have
	\begin{align*} 
	&\|\varepsilon_1^{-1} F-V^{(1)}\|_{C(\Omega\times\R^n)}\\
	&\leq \|\varepsilon_1^{-1}\varepsilon_2 g_2\|_{C(\Gamma_-)}+C \|\int_0^{\tau_-(x,v)}\varepsilon_1^{-1}Q(F,F)(x-sv,v)ds\|_{C(\Omega\times\R^n)}\\
	&\leq \|\varepsilon_1^{-1}\varepsilon_2 g_2\|_{C(\Gamma_-)}+C\varepsilon^{-1}_1\|F\|^2_{C(\Omega\times\R^n)}\max_{\Omega\times\R^n}\LC  \tau_-(x,v) \int_{\R^n}\int_{\mathbb{S}^{n-1}}|B| \, d\omega du\RC \\
	&\leq \|\varepsilon_1^{-1}\varepsilon_2 g_2\|_{C(\Gamma_-)}+C \varepsilon^{-1}_1(\varepsilon_1\|g_1\|_{C(\Gamma_-)}+ \varepsilon_2\|g_2\|_{C(\Gamma_-)})^2M .
	\end{align*}
	Let $\varepsilon_2\rightarrow 0$ and then we have
	$$
	\|\varepsilon_1^{-1} F(x,v;\varepsilon_1,0)-V^{(1)} \|_{C(\Omega\times\R^n)}\rightarrow 0\qquad \hbox{when }\varepsilon_1\rightarrow0.
	$$
	Similarly, for $k=2$, it also leads to  $$
	\|\varepsilon_2^{-1} F(x,v;0,\varepsilon_2)-V^{(2)} \|_{C(\Omega\times\R^n)}\rightarrow 0\qquad \hbox{when }\varepsilon_2\rightarrow0.
	$$

	Further, to show the second limits, we note that $\varepsilon_1^{-1} (F- F^{(2)})-V^{(1)}$ satisfies the problem 
		\begin{align}\label{lemma: Boltzmann equ small data 2}
	\left\{ \begin{array}{ll} 
	v\cdot \nabla_x (\varepsilon_1^{-1} (F- F^{(2)})-V^{(1)}) = \varepsilon_1^{-1}(Q(F,F) - Q(F^{(2)},F^{(2)}))& \hbox{in } \Omega\times\R^n,\\
	\varepsilon_1^{-1}(F-F^{(2)})-V^{(1)}= 0 &  \hbox{on }\Gamma_-.\\
	\end{array}\right. 
	\end{align}
	By a direct computation and applying Remark~\ref{remark} and \eqref{constraint B} again, we obtain the following estimate
	\begin{align*}
	    &\|\varepsilon_1^{-1} (F- F^{(2)})-V^{(1)}\|_{C(\Omega\times\R^n)}\\
	    &\leq CM (\|F\|_{C(\Omega\times\R^n)}+\|F^{(2)}\|_{C(\Omega\times\R^n)}) (\|\varepsilon_1^{-1} (F- F^{(2)})-V^{(1)}\|_{C(\Omega\times\R^n)} + \|V^{(1)}\|_{C(\Omega\times\R^n)} ).
	\end{align*}
	Thus, we get
	$$
	 (1- C\varepsilon_1-C\varepsilon_2)\|\varepsilon_1^{-1} (F- F^{(2)})-V^{(1)}\|_{C(\Omega\times\R^n)}\leq (C\varepsilon_1+C\varepsilon_2 ) \|V^{(1)}\|_{C(\Omega\times\R^n)},
	$$
	that goes to zero when $\varepsilon\rightarrow 0$. This completes the proof of \eqref{asym 1st 2}.

	Following a similar computation as above, we can obtain \eqref{asym 2nd 2}.
\end{proof}

\begin{lemma}\label{lemma:2nd appro}
Moreover, we obtain 
\begin{align}\label{asym 2nd}
\lim_{\varepsilon\rightarrow 0 }\|(\varepsilon_1 \varepsilon_2)^{-1}(F - F^{(2)} -F^{(1)} )- W\|_{C(\Omega\times\R^n)} =0.
\end{align}
\end{lemma}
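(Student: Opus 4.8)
The plan is to work with the full remainder $H := F - F^{(1)} - F^{(2)}$ and the rescaled error $R := (\varepsilon_1\varepsilon_2)^{-1}H - W$, and to show $\|R\|_{C(\Omega\times\R^n)} \to 0$ as $\varepsilon\to 0$ using the transport representation of Remark~\ref{remark} together with Lemma~\ref{lemma:1st appro}. Since $g=\varepsilon_1 g_1+\varepsilon_2 g_2$ while $F^{(1)}$ and $F^{(2)}$ carry boundary data $\varepsilon_1 g_1$ and $\varepsilon_2 g_2$, one has $H|_{\Gamma_-}=0$, hence also $R|_{\Gamma_-}=0$. Using the bilinearity of $Q$, I would expand $Q(F,F)=Q(F^{(1)}+F^{(2)}+H,\,F^{(1)}+F^{(2)}+H)$ and subtract $Q(F^{(1)},F^{(1)})$ and $Q(F^{(2)},F^{(2)})$ to obtain
\[
v\cdot\nabla_x H = Q(F^{(1)},F^{(2)}) + Q(F^{(2)},F^{(1)}) + \mathcal{R}(H),\qquad H|_{\Gamma_-}=0,
\]
where $\mathcal{R}(H):=Q(F^{(1)},H)+Q(H,F^{(1)})+Q(F^{(2)},H)+Q(H,F^{(2)})+Q(H,H)$ collects every term containing $H$. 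Dividing by $\varepsilon_1\varepsilon_2$, pulling $\varepsilon_1^{-1}$ and $\varepsilon_2^{-1}$ inside $Q$ by bilinearity, and subtracting the equation $v\cdot\nabla_x W=S$ of \eqref{Boltzmann equ linear W}, I arrive at
\[
v\cdot\nabla_x R = \Big[\,Q\big(\tfrac{F^{(1)}}{\varepsilon_1},\tfrac{F^{(2)}}{\varepsilon_2}\big) + Q\big(\tfrac{F^{(2)}}{\varepsilon_2},\tfrac{F^{(1)}}{\varepsilon_1}\big) - S\,\Big] + \frac{1}{\varepsilon_1\varepsilon_2}\,\mathcal{R}(H),\qquad R|_{\Gamma_-}=0,
\]
where the identity $Q(V^{(1)},V^{(2)})+Q(V^{(2)},V^{(1)})=S$, immediate from \eqref{eq:S}, is exactly what makes the bracketed quantity a genuine error term.

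The first and principal step will be the a priori bound $\|H\|_{C(\Omega\times\R^n)}\le C\varepsilon_1\varepsilon_2$. Applying Remark~\ref{remark} (the case $\sigma=0$) to the $H$-equation, bounding $Q$ pointwise in $v$ by $\big(\int_{\R^n}\int_{\mathbb{S}^{n-1}}|B(v,u,\omega)|\,d\omega du\big)$ times products of $C$-norms, and using \eqref{constraint B} to convert the $\tau_-$-weight into the constant $M$, I get
\[
\|H\| \le CM\Big(\|F^{(1)}\|\,\|F^{(2)}\| + \big(\|F^{(1)}\|+\|F^{(2)}\|\big)\|H\| + \|H\|^2\Big).
\]
Since $\|F^{(k)}\|\le C\varepsilon_k\|g_k\|_{C(\Gamma_-)}$ and the crude bound $\|H\|\le\|F\|+\|F^{(1)}\|+\|F^{(2)}\|\le C(\varepsilon_1+\varepsilon_2)$ already follow from Theorem~\ref{thm:well posedness}, for $\varepsilon$ small the terms linear and quadratic in $\|H\|$ on the right are each a small multiple of $\|H\|$ and can be absorbed into the left-hand side, leaving $\|H\|\le C'\varepsilon_1\varepsilon_2$.

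The second step estimates $R$ from its equation via Remark~\ref{remark}: $\|R\|$ is controlled by $\sup_{(x,v)}\tau_-(x,v)$ applied to the $C(\Omega)$-norm in $x$ of the two right-hand terms. For the bracket I would split it, again by bilinearity, into four terms, each having one argument of the form $\tfrac{F^{(1)}}{\varepsilon_1}-V^{(1)}$ or $\tfrac{F^{(2)}}{\varepsilon_2}-V^{(2)}$ and the other a uniformly bounded function ($\varepsilon_k^{-1}F^{(k)}$ or $V^{(k)}$); by \eqref{constraint B} each term, after the $\tau_-$-weighting, is at most $M$ times a factor going to $0$ by \eqref{asym 1st}--\eqref{asym 1st 0} times a bounded factor, so the bracket tends to $0$ uniformly in $(x,v)$. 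For $(\varepsilon_1\varepsilon_2)^{-1}\mathcal{R}(H)$, combining \eqref{constraint B} with the Step~1 bound $\|H\|\le C'\varepsilon_1\varepsilon_2$ yields a bound of order $(\varepsilon_1+\varepsilon_2)+\varepsilon_1\varepsilon_2$, which also tends to $0$. Together these give $\|R\|_{C(\Omega\times\R^n)}\to 0$, i.e. \eqref{asym 2nd}. I expect the only delicate point to be the a priori estimate of Step~1: the self-interaction $Q(H,H)$ has to be tamed, and that is precisely where the crude bound $\|H\|=O(\varepsilon_1+\varepsilon_2)$ and the smallness of $\varepsilon$ enter to close the absorption argument.
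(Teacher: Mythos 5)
Your argument is correct, and it shares the paper's skeleton (zero incoming data for the rescaled error, the representation of Remark~\ref{remark}, and condition \eqref{constraint B} to absorb the $\tau_-$-weight), but it handles the nonlinear source differently. The paper never isolates $H=F-F^{(1)}-F^{(2)}$ or proves an a priori bound on it; instead it rearranges the second difference of pointwise products
\[
(\varepsilon_1\varepsilon_2)^{-1}\bigl(F(x,u')F(x,v')-F^{(2)}(x,u')F^{(2)}(x,v')-F^{(1)}(x,u')F^{(1)}(x,v')\bigr)
\]
algebraically and passes to the limit using the \emph{mixed} first-order limits \eqref{asym 1st 2}--\eqref{asym 2nd 2} of Lemma~\ref{lemma:1st appro} (i.e.\ $\varepsilon_1^{-1}(F-F^{(2)})\to V^{(1)}$ and $\varepsilon_2^{-1}(F-F^{(1)})\to V^{(2)}$), concluding that the whole source $\mathcal{H}$ of the $\mathcal{G}$-equation tends to zero in sup norm. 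You instead exploit bilinearity of $Q$ to split the source into the cross term $Q(\varepsilon_1^{-1}F^{(1)},\varepsilon_2^{-1}F^{(2)})+Q(\varepsilon_2^{-1}F^{(2)},\varepsilon_1^{-1}F^{(1)})-S$, handled by the pure limits \eqref{asym 1st}--\eqref{asym 1st 0} only, plus the $H$-dependent remainder, which you control by first proving $\|H\|_{C(\Omega\times\R^n)}\le C\varepsilon_1\varepsilon_2$ through an absorption argument (legitimate, since the crude bound $\|H\|=O(\varepsilon_1+\varepsilon_2)$ from Theorem~\ref{thm:well posedness} makes the linear and quadratic $\|H\|$-terms absorbable for small $\varepsilon$). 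What your route buys is a quantitative statement: the convergence in \eqref{asym 2nd} is seen to be of order $\varepsilon_1+\varepsilon_2$, and you never need the mixed limits of Lemma~\ref{lemma:1st appro}; what the paper's route buys is that it avoids the extra a priori estimate and the absorption step, leaning entirely on the already-proved first-order lemma. Both are complete proofs; just make sure, when you invoke \eqref{constraint B}, that the pointwise bound on $Q$ is taken before multiplying by $\tau_-(x,v)$, exactly as in the paper's use of Remark~\ref{remark}.
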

\begin{proof}
We denote the function
$$
   \mathcal{G} :=(\varepsilon_1 \varepsilon_2)^{-1}(F(x,v;\varepsilon) - F^{(2)}(x,v)-F^{(1)}(x,v) )- W(x,v),
$$
then $\mathcal{G}$ satisfies
\begin{align}\label{lemma: Boltzmann G}
\left\{ \begin{array}{ll} 
    v\cdot \nabla_x \mathcal{G} = \mathcal{H} & \hbox{in } \Omega\times\R^n,\\
    \mathcal{G} = 0 &  \hbox{on }\Gamma_-,\\
\end{array}\right. 
\end{align}
where we used the notations defined in \eqref{notation F} and we define 
$$
    \mathcal{H}:= (\varepsilon_1\varepsilon_2)^{-1}(Q(F,F)-Q( F^{(2)}, F^{(2)})-Q( F^{(1)}, F^{(1)})) - Q(V^{(1)},V^{(2)})-Q(V^{(2)},V^{(1)}) .
$$
By using Remark~\ref{remark} and \eqref{constraint B} again, it leads to
\begin{align}\label{mathcal G}
 \|\mathcal{G}\|_{C(\Omega\times\R^n)}  
 \leq CM \| \mathcal{H}_1\|_{C(\Omega\times\R^n)}+CM \| \mathcal{H}_2\|_{C(\Omega\times\R^n)},
\end{align}
where 
\begin{align*}
\mathcal{H}_1: = \,&(\varepsilon_1\varepsilon_2)^{-1}(F(x,u')F(x,v') - F^{(2)}(x,u') F^{(2)}(x,v') -  F^{(1)}(x,u') F^{(1)}(x,v') )\\
&	-V^{(1)}(x,u')V^{(2)}(x,v')-V^{(1)}(x,v')V^{(2)}(x,u'),
\end{align*}
and 
\begin{align*}
\mathcal{H}_2: = \,&(\varepsilon_1\varepsilon_2)^{-1}( F(x,u)F(x,v) - F^{(2)}(x,u) F^{(2)}(x,v) -  F^{(1)}(x,u) F^{(1)}(x,v)) \\
& - V^{(1)}(x,u)V^{(2)}(x,v)-V^{(1)}(x,v)V^{(2)}(x,u).
\end{align*}
We observe that 
\begin{align}\label{2nd identity}
& \lim_{\varepsilon\rightarrow 0} (\varepsilon_1\varepsilon_2)^{-1}(F(x,u')F(x,v') - F^{(2)}(x,u') F^{(2)}(x,v') -  F^{(1)}(x,u') F^{(1)}(x,v') )\notag\\
&= \lim_{\varepsilon\rightarrow 0} \varepsilon_1^{-1}( F(x,u')- F^{(2)}(x,u')) \varepsilon^{-1}_2(F(x,v')- F^{(1)}(x,v') )-(\varepsilon_1\varepsilon_2)^{-1} F^{(2)}(x,u') F^{(1)}(x,v')\notag\\
&\quad + (\varepsilon_1\varepsilon_2)^{-1} [F^{(2)}(x,u')(F(x,v')- F^{(2)}(x,v'))   + F^{(1)}(x,v')(F(x,u') - F^{(1)}(x,u')) ]\notag\\
& = V^{(1)}(x,u')V^{(2)}(x,v')-V^{(1)}(x,v')V^{(2)}(x,u')\notag\\
&\quad +V^{(1)}(x,v')V^{(2)}(x,u')+V^{(1)}(x,v')V^{(2)}(x,u')\notag\\
&= V^{(1)}(x,u')V^{(2)}(x,v')+V^{(1)}(x,v')V^{(2)}(x,u'),
\end{align}
where we applied Lemma~\ref{lemma:1st appro}. Therefore, we have
$$
   \lim\limits_{\varepsilon\rightarrow 0} \|\mathcal{H}_1\|_{C(\Omega\times\R^n)} =0.
$$
Similarly, replacing $u',v'$ by $u,v$ in \eqref{2nd identity}, we obtain 
\begin{align*}
 &\lim_{\varepsilon\rightarrow 0}(\varepsilon_1\varepsilon_2)^{-1}( F(x,u)F(x,v) - F^{(2)}(x,u) F^{(2)}(x,v) -  F^{(1)}(x,u) F^{(1)}(x,v))\\
 &= V^{(1)}(x,u)V^{(2)}(x,v)+V^{(1)}(x,v)V^{(2)}(x,u),
\end{align*}
and then we get
$$
\lim\limits_{\varepsilon\rightarrow 0} \|\mathcal{H}_1\|_{C(\Omega\times\R^n)} =0.
$$
Thus, combining the above two limits of $\mathcal{H}_j$, it clearly implies that the right-hand side of \eqref{mathcal G} approaches to zero as $\varepsilon$ goes to zero. This completes the proof.
\end{proof}

From Lemma~\ref{lemma:1st appro} and Lemma~\ref{lemma:2nd appro}, now we can denote the solution $V^{(k)}$ for \eqref{Boltzmann equ V} by the first derivative of $F$,  $\p_{\varepsilon_k}F|_{\varepsilon=0}$, that is, 
$$
    V^{(k)}=\p_{\varepsilon_k}F|_{\varepsilon=0}.
$$ 
In particular, the solution $V^{(k)}$ takes the form
$$
V^{(k)}(x,v) = g_k(x-\tau_-(x,v)v,v)  
$$
for any $(x,v)\in \Omega\times \R^3$ for $k=1,2$.

Moreover, we can also denote the solution $W$ for \eqref{Boltzmann equ linear W} by the second derivative $\p_{\varepsilon_1}\p_{\varepsilon_2} F|_{\varepsilon=0}$ , that is, 
\begin{equation} \label{eq:Wdef}
W= \p_{\varepsilon_1}\p_{\varepsilon_2} F|_{\varepsilon=0}.
\end{equation}
In addition, the solution $W$ can be expressed as
\begin{equation} \label{eq:Sint} 
W(x,v)= \int^{\tau_-(x,v)}_0 	 S(x-sv, v)\,ds ,
\end{equation}
where $S$ is 	defined in \eqref{eq:S}.

\subsection{Linearization of the boundary map}
We first extend the Boltzmann solution to the boundary $\Gamma_+$ in Lemma~\ref{lemma trace} and then show the boundedness of the operator $\mathcal{A}$ in Proposition~\ref{prop A}. Finally, we turn to illustrate the linearization of $\mathcal{A}$ in Lemma~\ref{lemma bdry asym}.

In the following lemma, we show a trace theorem in the spirit of \cite{Ces84, Ces85}, see also \cite{IKun,CS98}.
\begin{lemma}\label{lemma trace}
Let $F$ be the solution of \eqref{Boltzmann equ}. Suppose that $B$ satisfies \eqref{constraint B}. For all $(x ,v)\in\Gamma_+$, the limit
$F(x,v) = \lim\limits_{t\downarrow 0} F(x-tv,v)$  
exists and, moreover, $F\in C(\Gamma_\pm)$.
\end{lemma}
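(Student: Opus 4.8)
The plan is to establish the trace statement by combining the explicit representation formula for $F$ with the estimates already obtained in Section~\ref{sec:Preliminaries}. Recall that by Theorem~\ref{thm:well posedness}, $F = F_0 + \hat F$ where $F_0$ solves the transport equation \eqref{Boltzmann equ 0} with boundary data $g$, and $\hat F = \mathcal{L}^{-1}(Q(F,F))$ solves \eqref{Boltzmann equ 1}. First I would treat $F_0$: since $F_0(x,v) = g(x - \tau_-(x,v)v, v)$, the trace onto $\Gamma_+$ is governed entirely by the regularity of the exit-time maps $\tau_\pm$ and the backward footpoint map $(x,v)\mapsto (x-\tau_-(x,v)v, v)$. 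Because $\p\Omega$ is $C^\infty$, these maps are continuous on $\Gamma_+$ (and up to $\Gamma_+$ from inside $\Omega\times\R^n$ along the characteristic $t\mapsto x - tv$), so for $(x,v)\in\Gamma_+$ the limit $\lim_{t\downarrow 0} F_0(x-tv,v) = g(x - \tau_-(x,v)v, v)$ exists and defines a continuous function on $\Gamma_+$; the same argument gives continuity on $\Gamma_-$. This is the classical trace mechanism of \cite{Ces84, Ces85, CS98}.

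Next I would handle the collision part $\hat F(x,v) = \int_0^{\tau_-(x,v)} Q(F,F)(x-sv, v)\, ds$. The integrand $Q(F,F)(\cdot, v)$ is bounded in $C(\Omega)$ uniformly by $\|B\|$-type quantities times $\|F\|_{C(\Omega\times\R^n)}^2$, and more precisely by \eqref{constraint B} the quantity $\tau_-(x,v)\|Q(F,F)(\cdot,v)\|_{C(\Omega)}$ is controlled by $CM\|F\|_{C(\Omega\times\R^n)}^2$. The point is that $s\mapsto Q(F,F)(x-sv,v)$ is continuous along the characteristic and the upper limit $\tau_-(x-tv,v) = \tau_-(x,v) - t$ varies continuously as $t\downarrow 0$; hence $\hat F(x-tv, v)$ converges as $t\downarrow 0$ to $\int_0^{\tau_-(x,v)} Q(F,F)(x-sv,v)\,ds$, with the limiting value continuous in $(x,v)\in\Gamma_+$ by continuity of the integrand in all parameters together with continuity of $\tau_-$ and dominated convergence. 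Adding the two pieces yields existence of the trace and $F\in C(\Gamma_+)$; the analogous computation at $s=0$ (where $\tau_-=0$, so $\hat F$ vanishes and $F = g$) gives $F\in C(\Gamma_-)$.

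The main obstacle is the regularity of the exit-time functions $\tau_\pm$ and the footpoint map near the set of \emph{grazing} directions, i.e. where $n(x)\cdot v = 0$ or where the characteristic is tangent to $\p\Omega$. On the open sets $\Gamma_\pm$ themselves one has $n(x)\cdot v\neq 0$, so by the implicit function theorem applied to the defining function of $\p\Omega$ the maps $\tau_\pm$ are smooth in a neighborhood of any point of $\Gamma_\pm$ inside $\overline\Omega\times(\R^n\setminus\{0\})$, and the limits along characteristics behave well; the subtlety is only that $\tau_-$ itself may be discontinuous across the grazing set, but since we only claim continuity \emph{on} $\Gamma_\pm$ (not up to their closure) this is not an issue. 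I would make this precise by covering $\Gamma_+$ with coordinate charts flattening $\p\Omega$ and checking continuity chart-by-chart, citing \cite{Ces84, Ces85} for the transport trace and invoking the uniform bound from \eqref{constraint B} to justify passing the limit inside the $s$-integral for the collision term. The remaining steps are routine continuity arguments.
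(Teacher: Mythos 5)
Your treatment of the core claim — the existence of the limit $\lim_{t\downarrow 0}F(x-tv,v)$ — is correct and is in substance the paper's argument: since $v\cdot\nabla_x F=Q(F,F)$ is bounded along each backward characteristic (by \eqref{constraint B} together with $\|F\|_{C(\Omega\times\R^n)}<\infty$), integrating the equation along $s\mapsto x-sv$ gives the trace, whether one phrases this as the fundamental theorem of calculus applied to $F$ (as the paper does) or through the decomposition $F=F_0+\hat F$ with $\hat F(x,v)=\int_0^{\tau_-(x,v)}Q(F,F)(x-sv,v)\,ds$ (as you do). Note that for this part no regularity of $\tau_-$ in $(x,v)$ is needed at all: along the fixed characteristic one has the exact identity $\tau_-(x-tv,v)=\tau_-(x,v)-t$, and $F_0$ is constant along the ray, so the limit follows from boundedness (or continuity) of $s\mapsto Q(F,F)(x-sv,v)$ alone.

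The step that does not go through as written is your justification of continuity on $\Gamma_\pm$. You argue that $n(x)\cdot v\neq 0$ on $\Gamma_+$ lets the implicit function theorem make $\tau_\pm$ smooth near every point of $\Gamma_+$; but the non-degeneracy relevant for $\tau_-$ and for the footpoint map $(x,v)\mapsto x-\tau_-(x,v)v$ is transversality at the \emph{backward exit point} $y=x-\tau_-(x,v)v$, i.e. $n(y)\cdot v\neq 0$, not at the base point $x$. For a general smooth bounded (in particular non-convex) $\Omega$, the backward ray issued from a point of $\Gamma_+$ can be tangent to $\partial\Omega$ at $y$; near such $(x,v)$ the exit time $\tau_-$ and hence $g(x-\tau_-(x,v)v,v)$ may jump even though $(x,v)$ is well inside $\Gamma_+$, so your chart-by-chart smoothness claim fails there. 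The paper's own proof sidesteps this by establishing only the pointwise limit along each characteristic together with the Lipschitz-in-$t$ estimate, which is all that is used subsequently; if you want genuine joint continuity of the trace on $\Gamma_\pm$ you would need an additional hypothesis (e.g. strict convexity of $\Omega$, or restriction to points whose backward footpoint is non-grazing) rather than the implicit function theorem at $x$.
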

\begin{proof}
Since $g\in C(\Gamma_-)$ and \eqref{constraint B}, we have $F$ and $v\cdot\nabla_x F$ are bounded in $\Omega\times\R^n$.
Suppose that $B$ satisfies \eqref{constraint B}. For any $(x ,v)\in\Gamma_+$, by the fundamental theorem of calculus, we conclude
\begin{align}\label{def F+}
    F(x ,v) = F(x-tv,v) + \int^{t}_0 v\cdot\nabla_x F(x-sv,v)ds,
\end{align}
We have
$$
|F(x ,v) - F(x-tv,v)|= |\int^{t}_0 v\cdot\nabla_x F(x-sv,v)ds| \leq t\|v\cdot\nabla_x F\|_{C(\Omega\times\R^n)}.
$$
This implies that $F|_{\Gamma_+}$ in \eqref{def F+} is well-defined and $F(x,v) = \lim\limits_{t\downarrow 0} F(x-tv,v)$ for all $(x ,v)\in\Gamma_+$.
\end{proof}

Lemma~\ref{lemma trace} immediately implies the following result.
\begin{proposition}\label{prop A}
The boundary operator $\mathcal{A}$ is a bounded map $\mathcal{A}: \mathcal{X}\rightarrow C(\Gamma_+)$, where $\mathcal{X}$ is defined in \eqref{small boundary}.
\end{proposition}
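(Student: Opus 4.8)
The plan is to read the statement off directly from the well-posedness estimate of Theorem~\ref{thm:well posedness} together with the trace result of Lemma~\ref{lemma trace}. Since $\mathcal{A}$ is a nonlinear map, by ``bounded'' I understand that $\mathcal{A}$ sends the ball $\mathcal{X}$ into a bounded subset of $C(\Gamma_+)$, and more precisely that there is a constant $C>0$, independent of $g$, with $\|\mathcal{A}(g)\|_{C(\Gamma_+)}\leq C\|g\|_{C(\Gamma_-)}$ for every $g\in\mathcal{X}$; I would state this quantitative version explicitly at the outset.

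First I would fix $g\in\mathcal{X}$ and invoke Theorem~\ref{thm:well posedness} to obtain the unique solution $F$ of \eqref{Boltzmann equ} with $\|F\|_{C(\Omega\times\R^n)}\leq C\|g\|_{C(\Gamma_-)}$, where $C$ depends only on $\Omega$ and on the constant $M$ from \eqref{constraint B}, hence not on $g$. Next, by Lemma~\ref{lemma trace}, for each $(x,v)\in\Gamma_+$ the limit $F(x,v)=\lim_{t\downarrow 0}F(x-tv,v)$ exists and the resulting function is continuous on $\Gamma_+$, so $\mathcal{A}(g)=F|_{\Gamma_+}\in C(\Gamma_+)$ is well-defined. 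Finally, since each boundary value $F(x,v)$ with $(x,v)\in\Gamma_+$ is a limit of interior values $F(x-tv,v)$ with $(x-tv,v)\in\Omega\times\R^n$, one has $|F(x,v)|\leq \|F\|_{C(\Omega\times\R^n)}$; taking the supremum over $(x,v)\in\Gamma_+$ gives $\|\mathcal{A}(g)\|_{C(\Gamma_+)}\leq \|F\|_{C(\Omega\times\R^n)}\leq C\|g\|_{C(\Gamma_-)}$, which is the asserted bound.

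I do not expect a genuine obstacle here: the two ingredients (the a priori bound of Theorem~\ref{thm:well posedness} and the continuity of the trace from Lemma~\ref{lemma trace}) have already been established, and the only point deserving a line of care is that the constant $C$ is uniform over the whole admissible set $\mathcal{X}$, which holds because $\varepsilon$ is fixed once and for all and $C$ depends only on $\Omega$ and $M$. If desired, one can add the remark that the same scheme, combined with the Lipschitz-type estimate underlying the contraction argument in the proof of Theorem~\ref{thm:well posedness}, shows that $\mathcal{A}$ is in fact continuous on $\mathcal{X}$, although this is not required for the stated conclusion.
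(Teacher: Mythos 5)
Your proposal is correct and matches the paper's intent: the paper deduces Proposition~\ref{prop A} immediately from Lemma~\ref{lemma trace} (with the a priori bound of Theorem~\ref{thm:well posedness} in the background), which is exactly the two-ingredient argument you give. You merely spell out explicitly the uniform estimate $\|\mathcal{A}(g)\|_{C(\Gamma_+)}\leq \|F\|_{C(\Omega\times\R^n)}\leq C\|g\|_{C(\Gamma_-)}$ that the paper leaves implicit, which is a fine (indeed slightly more informative) way to state the same proof.
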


Thus, Lemma~\ref{lemma:2nd appro} and Lemma~\ref{lemma trace} lead to the following result right away.
\begin{lemma}\label{lemma bdry asym}
For sufficiently small constants $\varepsilon_1,\,\varepsilon_2>0$ and $g_1,\,g_2\in C(\Gamma_-)$, we have
\begin{align*}
\lim_{\varepsilon \rightarrow 0 }\|(\varepsilon_1 \varepsilon_2)^{-1} (\mathcal{A} (\varepsilon_1 g_1+\varepsilon_2 g_2) - \mathcal{A}(\varepsilon_2 g_2)-\mathcal{A}(\varepsilon_1 g_1))- W\|_{C(\Gamma_+)} =0.
\end{align*}
\end{lemma}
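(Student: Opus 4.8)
The plan is to combine the interior approximation statement of Lemma~\ref{lemma:2nd appro} with the trace result of Lemma~\ref{lemma trace}, exploiting linearity of the boundary operator in the following sense: although $\mathcal{A}$ is nonlinear, it is defined as restriction to $\Gamma_+$ of the solution map, and this restriction commutes with taking finite linear combinations of solutions \emph{as functions}, so the combination $(\varepsilon_1\varepsilon_2)^{-1}(\mathcal{A}(\varepsilon_1 g_1+\varepsilon_2 g_2)-\mathcal{A}(\varepsilon_2 g_2)-\mathcal{A}(\varepsilon_1 g_1))$ is exactly the trace on $\Gamma_+$ of the interior function $(\varepsilon_1\varepsilon_2)^{-1}(F-F^{(2)}-F^{(1)})$, where $F,F^{(1)},F^{(2)}$ are the solutions of \eqref{Boltzmann equ small data} with boundary data $\varepsilon_1 g_1+\varepsilon_2 g_2$, $\varepsilon_1 g_1$, and $\varepsilon_2 g_2$ respectively (using the notation \eqref{notation F}).

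First I would set $\Phi_\varepsilon:=(\varepsilon_1\varepsilon_2)^{-1}(F-F^{(2)}-F^{(1)})$ and observe that each of $F,F^{(1)},F^{(2)}$ is a bounded solution of a Boltzmann equation of the form \eqref{Boltzmann equ} with $C(\Gamma_-)$ data, hence by Lemma~\ref{lemma trace} each extends continuously to $\Gamma_+$; therefore $\Phi_\varepsilon$ extends continuously to $\Gamma_+$ and $\Phi_\varepsilon|_{\Gamma_+}=(\varepsilon_1\varepsilon_2)^{-1}(\mathcal{A}(\varepsilon_1 g_1+\varepsilon_2 g_2)-\mathcal{A}(\varepsilon_2 g_2)-\mathcal{A}(\varepsilon_1 g_1))$. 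Likewise $W$, being the solution of \eqref{Boltzmann equ linear W} with bounded right-hand side $S$ (which is bounded because $V^{(1)},V^{(2)}$ are bounded and $B$ satisfies \eqref{constraint B}), has a continuous trace on $\Gamma_+$ by the same fundamental-theorem-of-calculus argument as in Lemma~\ref{lemma trace}. Then the key point is that the sup norm over $\Gamma_+$ is controlled by the sup norm over $\Omega\times\R^n$: more precisely, using \eqref{def F+} applied to the interior function $\Phi_\varepsilon-W$ (which solves a transport equation with a bounded source) one gets, for any $(x,v)\in\Gamma_+$,
\[
|(\Phi_\varepsilon-W)(x,v)| \le \|\Phi_\varepsilon-W\|_{C(\Omega\times\R^n)} + t\,\|v\cdot\nabla_x(\Phi_\varepsilon-W)\|_{C(\Omega\times\R^n)},
\]
and letting $t\downarrow 0$ yields $\|\Phi_\varepsilon-W\|_{C(\Gamma_+)}\le \|\Phi_\varepsilon-W\|_{C(\Omega\times\R^n)}$. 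Finally, Lemma~\ref{lemma:2nd appro} gives $\|\Phi_\varepsilon-W\|_{C(\Omega\times\R^n)}\to 0$ as $\varepsilon\to 0$, which is the desired conclusion.

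The only mildly delicate step is the continuity-of-trace argument for $\Phi_\varepsilon$: one should not differentiate $\Phi_\varepsilon$ directly but rather note that $\Phi_\varepsilon$ is a fixed linear combination of solutions each of which has, by Lemma~\ref{lemma trace}, a well-defined continuous extension to $\Gamma_+$ obtained as the limit $\lim_{t\downarrow 0}(\,\cdot\,)(x-tv,v)$; since the limit of a finite linear combination is the linear combination of the limits, $\Phi_\varepsilon$ extends continuously and its $\Gamma_+$-values coincide with the stated combination of $\mathcal{A}$-values. I expect this bookkeeping, together with verifying that $v\cdot\nabla_x(\Phi_\varepsilon-W)$ is bounded uniformly enough to push $t\downarrow 0$, to be the main (though routine) obstacle; everything else follows immediately from the cited lemmas.
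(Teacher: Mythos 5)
Your argument is correct and follows exactly the paper's route: the paper derives this lemma directly from Lemma~\ref{lemma:2nd appro} and Lemma~\ref{lemma trace} without further detail, and your write-up simply supplies the bookkeeping (identifying the $\mathcal{A}$-combination with the trace of $(\varepsilon_1\varepsilon_2)^{-1}(F-F^{(1)}-F^{(2)})$ and bounding the $C(\Gamma_+)$ norm by the interior sup norm). The only remark is that the derivative estimate with $t\downarrow 0$ is not really needed: since each trace value is a limit of interior values, $\|\Phi_\varepsilon-W\|_{C(\Gamma_+)}\le\|\Phi_\varepsilon-W\|_{C(\Omega\times\R^n)}$ follows at once.
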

\begin{remark}\label{remark W}
Based on the definition~\eqref{eq:Wdef} and Lemma~\ref{lemma bdry asym}, the outgoing boundary value $W|_{\Gamma_+}$ can be reconstructed as
\begin{equation} \label{eq:WA}
W|_{\Gamma_+} =   \lim_{\varepsilon\rightarrow 0} (\varepsilon_1\varepsilon_2)^{-1}(\mathcal{A}(\varepsilon_1 g_1 + \varepsilon_2 g_2) -\mathcal{A}(\varepsilon_2 g_2)-\mathcal{A}(\varepsilon_1 g_1 )).
\end{equation}
We obtain that if $\mathcal{A}_1(g) = \mathcal{A}_2(g)$ for all $g\in \mathcal{X}$, then the boundary operator $\mathcal{A}$ uniquely determines the function $W|_{\Gamma_+}$.
\end{remark}

\subsection{Proof of Theorem~\ref{main thm}}
We are ready to prove the first uniqueness result in this paper.
\begin{proof}[Proof of Theorem~\ref{main thm}]
When $|\varepsilon|$ is sufficiently small, the boundary value problem
\begin{align*}
\left\{ \begin{array}{ll} 
	v\cdot \nabla_x F_j  = Q_j(F_j,F_j)& \hbox{in } \Omega\times\R^n,\\
	F_j=\varepsilon_1 g_1+\varepsilon_2 g_2  &  \hbox{on }\Gamma_-,\\
\end{array}\right. 
\end{align*}
has a unique small solution $F_j=F_j(x,v;\varepsilon)\in C(\Omega\times\R^n)$.  
For $j=1,2$, differentiating the above equation with respect to $\varepsilon_k$ and taking $\varepsilon=0$, the function $V^{(k)}=\p_{\varepsilon_k}F_j|_{\varepsilon=0}$ is the solution to the problem
\begin{align}\label{proof:Boltzmann equ linear 1}
\left\{ \begin{array}{ll} 
v\cdot \nabla_x V^{(k)} = 0 & \hbox{in } \Omega\times\R^n,\\
V^{(k)}=  g_k& \hbox{on }\Gamma_-.\\
\end{array}\right. 
\end{align}  
In addition, we also have $$W_j= \p_{\varepsilon_1}\p_{\varepsilon_2} F_j|_{\varepsilon=0},$$ satisfying the problem 
\begin{align}\label{proof:Boltzmann equ linear W}
\left\{ \begin{array}{ll} 
v\cdot \nabla_x W_j= S_j(x,v)  & \hbox{in } \Omega\times\R^n,\\
W_j= 0& \hbox{on }\Gamma_-,\\
\end{array}\right. 
\end{align}
where the source term is 
\begin{align*}
S_j(x,v)&=\int_{\R^n}\int_{\mathbb{S}^{n-1}} B_j(v,u,\omega) [V^{(1)}(x,v')V^{(2)}(x,u')+V^{(1)}(x,u')V^{(2)}(x,v') \\
&\hskip4cm - V^{(1)}(x,u)V^{(2)}(x,v)-V^{(1)}(x,v)V^{(2)}(x,u)]\,d\omega du.
\end{align*}
In particular, the solution $W_j$ can be written as
\begin{align*}
W_j(x,v)= \int^{\tau_-(x,v)}_0  S_j(x-sv, v)\,ds .
\end{align*}

Since the maps $\mathcal{A}_1 (g)=\mathcal{A}_2(g)$ for all boundary data $g\in \mathcal{X}$, from Remark~\ref{remark W}, we have $$W_1|_{\Gamma_+}=W_2|_{\Gamma_+}.$$ Thus, for any $(x,v)\in\Gamma_+$, one can derive that
\begin{align}\label{B1B2}
    0&= \int_0^{\tau_-(x,v)}  (S_1-S_2)(x-sv,v)\,ds \notag\\
    &=  \int_0^{\tau_-(x,v)} \LC \int_{\R^n}\int_{\mathbb{S}^{n-1}}(B_1-B_2)(v,u,\omega)P(x-sv,v,u,\omega)d\omega du\RC ds,
\end{align}
where we denote $P$ by
\begin{align*} 
    P(x,v,u,\omega) &:= V^{(1)}(x,v')V^{(2)}(x,u')+V^{(1)}(x,u')V^{(2)}(x,v') \\
      &\hskip2cm- V^{(1)}(x,u)V^{(2)}(x,v)-V^{(1)}(x,v)V^{(2)}(x,u).
\end{align*}

Fixing a nonzero vector $v_0\in \R^n$. Note that the equation \eqref{proof:Boltzmann equ linear 1} is independent of the kernel. Thus, we can freely choose 
$$
    V^{(1)}(x,v)= e^{|v-v_0|^2}\ \ \hbox{and }\ V^{(2)}(x,v)\equiv 1.
$$ 
By substituting them into \eqref{B1B2}, we obtain 
\begin{align}\label{}
    0=  \int_0^{\tau_-(x,v)} \int_{\R^n}\int_{\mathbb{S}^{n-1}} (B_1-B_2)(v,u,\omega) P(v,u,\omega)\,  d\omega duds,
\end{align}
with 
$$
    P(v,u,\omega):=P(x,v,u,\omega) = e^{|u'-v_0|^2}+e^{|v'-v_0|^2}-e^{|v-v_0|^2}-e^{|u-v_0|^2}.
$$

By applying \eqref{velocities after collision} with incoming velocities $u,v_0$, we obtain
$$
    |u'-v_0|^2= |u-v_0|^2 - |(v_0-u)\cdot w|^2\ \ \hbox{and} \ \ |v'-v_0|^2=|(v_0-u)\cdot \omega|^2,
$$
and then we apply these identities to derive that
\begin{align*}
    P(v_0,u,\omega) &= e^{|u-v_0|^2 - |(v_0-u)\cdot \omega|^2}+e^{|(v_0-u)\cdot \omega|^2}-1-e^{|u-v_0|^2}\\
    &=(1-e^{-|(v_0-u)\cdot \omega|^2}) (e^{|(v_0-u)\cdot \omega|^2}-e^{|u-v_0|^2}).
\end{align*} 
We also denote the subspace $N_{v_0u}$ of the unit sphere by
$$
    N_{v_0u}=\left\{z\in\mathbb{S}^{n-1}:\ z\perp(v_0-u)\ \hbox{or }z =\pm {v_0-u\over |v_0-u|}\right\}.
$$ 
Then $P=0$ if and only if $\omega \in N_{v_0u}$.
On the other hand, if $\omega\notin N_{v_0u}$, then $P(v_0,u,\omega)<0$. Thus, we have $P(v_0,u,\omega)\leq 0$.
From the monotonicity condition $B_1\geq B_2$, we further get 
$$
    (B_1-B_2)(v_0,u,\omega) P(v_0,u,\omega)\leq 0 \ \ \hbox{for all }x\in\Omega,\  u\in\R^n, \ \ \omega\in\mathbb{S}^{n-1},
$$ which implies that
\begin{align*}
    0&=  \int_0^{\tau_-(x,v_0)}\int_{\R^n}\int_{\mathbb{S}^{n-1}} (B_1-B_2)(v_0,u,\omega) P(v_0,u,\omega) \, d\omega duds\leq 0. 
\end{align*}
Since $(B_1-B_2)P\leq 0$ and $B_j$ and $P$ are continuous, we have
$$
(B_1-B_2)(v_0,u,\omega) P(v_0,u,\omega)=0 \ \ \hbox{for all }	u\in\R^n, \ \ \omega\in\mathbb{S}^{n-1}.
$$
Therefore, from the fact that $P<0$ for every $\omega\notin N_{v_0u}$ and the continuity of $B_j$, for any nonzero fixed vector $v_0$, we can conclude that $B_1(v_0,\cdot,\cdot)=B_2(v_0,\cdot, \cdot)$ in $\R^n\times\mathbb{S}^{n-1}$. Since $v_0$ is arbitrary, we can obtain $B_1=B_2$, which completes the proof.
 
\end{proof}


\section{A Reconstruction formula}\label{sec:formula}
In this section, we derive a reconstruction formula for the kernel $B$ by making use of special solutions $V^{(k)}$, that concentrate near the incoming directions. Furthermore, by using this established formula, we can show that the uniqueness results hold in two special cases, as stated in Corollary~\ref{Coro}.

Recall that for any $u,v\in\mathbb{R}^n$ and any $\omega\in\mathbb{S}^{n-1}$, we will insist two basic properties of the collision kernel $B$, as stated in \eqref{assumption B}:
\begin{enumerate}
\item $B$ is symmetric in incoming velocities: $B(v,u,\omega) = B(u,v,\omega)$;
\item $B$ is an even function of $\omega$: $B(v,u,-\omega)=B(v,u,\omega)$. 
\end{enumerate}
 
We recall that $W$ solves the boundary value problem~\eqref{Boltzmann equ linear W}, hence can be written as line integrals of the internal source $S$, see~\eqref{eq:Sint}. We are interested in the case where $S$ does not depend on $x$. Then~\eqref{eq:Sint} reduces to 
\begin{equation} \label{eq:WS}
W(x,v) = \tau_-(x,v) S(v)
\end{equation}
for $(x,v)\in\Gamma_+$. Therefore, we can recover $S(v)$ from $\mathcal{A}$ as long as there is at least one $x\in\partial\Omega$ such that $\tau_-(x,v)\neq 0$.

In view of~\eqref{eq:S}, the way to make $S$ independent of $x$, as in the proof of Theorem~\ref{main thm}, is to choose transport solutions $V^{(1)}$ and $V^{(2)}$ in~\eqref{eq:S} that only depend on $v$, that is, 
\begin{align} 
S(v) &:= \int_{\R^n}\int_{\mathbb{S}^{n-1}} B(v,u,w) [V^{(1)}(v')V^{(2)}(u')+V^{(1)}(u')V^{(2)}(v') \nonumber \\
&\hskip4cm - V^{(1)}(v)V^{(2)}(u)-V^{(1)}(u)V^{(2)}(v)]\,dwdu, \label{eq:Sv}
\end{align}
where $V^{(k)} = V^{(k)}(v)$ automatically solves the transport equation $v\cdot\nabla_x V^{(k)}=0$ in $\Omega\times\mathbb{R}^n$, $k=1,2$. 
It suffices to construct special transport solutions to extract information on $B$. The $x$-independent solutions are sufficient for our purpose since $B=B(v,u,\omega)$ does not depend on $x$.

Pick three distinct vectors $u_0, v_0, v_\ast\in\mathbb{R}^n$. 
We formally choose $V^{(1)} = \delta_{v_0}$, $V^{(2)} = \delta_{u_0}$ in~\eqref{eq:Sv}, then multiply~\eqref{eq:Sv} by the delta function $\delta_{v_\ast}(v)$ and integrate in $v$ over $\mathbb{R}^n$ to obtain
$$ 
S(v_\ast, v_0, u_0) := I_1 + I_2 + I_3 + I_4,
$$
where $I_j, j=1,\cdots,4$, is defined by
\begin{align*}
I_1 & :=  \int_{\R^n} \int_{\R^n}\int_{\mathbb{S}^{n-1}} B(v,u,\omega) \delta_{v_0}(v') \delta_{u_0}(u') \delta_{v_\ast}(v) \,d\omega du dv;\\
I_2 & := \int_{\R^n} \int_{\R^n}\int_{\mathbb{S}^{n-1}} B(v,u,\omega) \delta_{v_0}(u') \delta_{u_0}(v') \delta_{v_\ast}(v) \,d\omega du dv; \\
I_3 & := - \int_{\R^n} \int_{\R^n}\int_{\mathbb{S}^{n-1}} B(v,u,\omega) \delta_{v_0}(v) \delta_{u_0}(u) \delta_{v_\ast}(v) \,d\omega du dv; \\
I_4 & := - \int_{\R^n} \int_{\R^n}\int_{\mathbb{S}^{n-1}} B(v,u,\omega) \delta_{v_0}(u) \delta_{u_0}(v) \delta_{v_\ast}(v) \,d\omega du dv.
\end{align*}
The value $S(v_\ast, v_0, u_0)$ can be calculated from $\mathcal{A}$ using~\eqref{eq:WS} and ~\eqref{eq:WA} with $g_1 = V^{(1)}|_{\Gamma_-} = \delta_{v_0}|_{\Gamma_-}$ and $g_2 = V^{(2)}|_{\Gamma_-} = \delta_{u_0}|_{\Gamma_-}$. We will divide the calculation of the four integrals into several lemmas and propositions. We note that the arguments below can be made rigorously by replacing the delta functions by limits of some smooth cut-off functions.

\subsection{Preliminaries}

We remark that $u'$ and $v'$ in the integrands of $I_1$ and $I_2$ should be interpreted as functions of $u$ and $v$, as was defined in~\eqref{velocities after collision}. Since the map $(u,v)\mapsto (u',v')$ in~\eqref{velocities after collision} is an isometry for each $\omega\in\mathbb{S}^{n-1}$, one can invert it to write $(u,v)$ as functions of $(u',v')$ as well. Explicitly, 
\begin{equation} \label{eq:uv}
u = u(u',v',\omega) := u' - [(u'-v')\cdot\omega]\omega, \quad\quad\quad  v = v(u',v',\omega) := v' + [(u'-v')\cdot\omega]\omega.
\end{equation}
Some basic properties of these functions are recorded below.
\begin{lemma} \label{lemma:uvproperty}
The functions $u = u(u',v',\omega)$ and $v = v(u',v',\omega)$ defined in~\eqref{eq:uv} satisfy
\begin{enumerate}
\item $u(u',v',-\omega) = u(u',v',\omega)$ and $v(u',v',-\omega) = v(u',v',\omega)$;
\item $u(v',u',\omega) = v(u',v',\omega)$ and $v(v',u',\omega) = u(u',v',\omega)$. 
\end{enumerate}
\end{lemma}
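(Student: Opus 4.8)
The plan is to prove both identities by direct substitution into the explicit formulas~\eqref{eq:uv}; no structural input is needed beyond the observation that the vector-valued quantity $[(u'-v')\cdot\omega]\,\omega$ occurring in both $u(u',v',\omega)$ and $v(u',v',\omega)$ is quadratic in $\omega$, hence insensitive to the replacement $\omega\mapsto-\omega$, and is odd under the swap $u'\leftrightarrow v'$. Everything else is bookkeeping.

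For part (1), I would substitute $-\omega$ for $\omega$ in~\eqref{eq:uv} and use
\begin{equation*}
[(u'-v')\cdot(-\omega)](-\omega) = (-1)^2\,[(u'-v')\cdot\omega]\,\omega = [(u'-v')\cdot\omega]\,\omega ,
\end{equation*}
so that $u' - [(u'-v')\cdot\omega]\omega$ and $v' + [(u'-v')\cdot\omega]\omega$ are both unchanged; this gives $u(u',v',-\omega) = u(u',v',\omega)$ and $v(u',v',-\omega) = v(u',v',\omega)$ simultaneously.

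For part (2), I would interchange the roles of $u'$ and $v'$ in~\eqref{eq:uv} and use $(v'-u')\cdot\omega = -\big[(u'-v')\cdot\omega\big]$ to get
\begin{equation*}
u(v',u',\omega) = v' - [(v'-u')\cdot\omega]\omega = v' + [(u'-v')\cdot\omega]\omega = v(u',v',\omega),
\end{equation*}
and symmetrically $v(v',u',\omega) = u' - [(u'-v')\cdot\omega]\omega = u(u',v',\omega)$.

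I do not expect any genuine obstacle: the lemma is an immediate consequence of the definitions, and the only point requiring minor care is tracking the sign of the inner product in part (2). For orientation I would also note that these are simply the $(u',v',\omega)$-side analogues of the evident symmetries of the forward map~\eqref{velocities after collision}, and that they are exactly the identities one needs in order to symmetrize the integrals $I_1,\dots,I_4$ once the assumptions~\eqref{assumption B} on $B$ are imposed.
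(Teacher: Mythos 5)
Your proof is correct and follows essentially the same direct computation as the paper: in part (1) the quantity $[(u'-v')\cdot\omega]\omega$ is even in $\omega$, and in part (2) swapping $u'\leftrightarrow v'$ flips the sign of the inner product, which exchanges the two formulas in~\eqref{eq:uv}. No gap to report.
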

\begin{proof} These are straightforward calculations:
\begin{enumerate}
\item $u(u',v',-\omega) = u' - [(u'-v')\cdot(-\omega)](-\omega) = u' - [(u'-v')\cdot\omega]\omega = u(u',v',\omega)$.\\
$v(u',v',-\omega) = v' + [(u'-v')\cdot(-\omega)](-\omega) = v' + [(u'-v')\cdot\omega]\omega = v(u',v',\omega)$.
\item $u(v',u',\omega) = v' - [(v'-u')\cdot\omega]\omega = v' + [(u'-v')\cdot\omega]\omega = v(u',v',\omega)$.\\
$v(v',u',\omega) = u' + [(v'-u')\cdot\omega]\omega = u' - [(u'-v')\cdot\omega]\omega = u(u',v',\omega)$.
\end{enumerate}
\end{proof}

We study the solvability of two equations for $\omega$, which will be used later to compute $I_1$ and $I_2$. For any nonzero vector $u\in\mathbb{R}^n$, we denote by $\hat{u}$ the unit vector along the direction of $u$, that is, $\hat{u} := \frac{u}{|u|} \in\mathbb{S}^{n-1}$.

\begin{lemma} \label{lemma:solvability1}
Let $u_0, v_0, v_\ast\in\mathbb{R}^n$ be three distinct vectors. 
\begin{enumerate}
	\item The equation $v_\ast = v(u_0,v_0,\omega)$ admits solutions $\omega\in\mathbb{S}^{n-1}$ if and only if 
	\begin{equation} \label{eq:rel1}
	(v_\ast - v_0)\cdot(u_0 - v_0) = |v_\ast - v_0|^2.
	\end{equation}
	When~\eqref{eq:rel1} holds, the solutions are $\omega=\pm\omega_1$, where $\omega_1 := \widehat{(v_\ast - v_0)}$.
		
	\item  The equation $v_\ast = v(v_0,u_0,\omega)$ admits solutions $\omega\in\mathbb{S}^{n-1}$ if and only if 
	\begin{equation} \label{eq:rel2}
	-(v_\ast - u_0)\cdot(u_0 - v_0) = |v_\ast - u_0|^2.
	\end{equation}
	When~\eqref{eq:rel2} holds, the solutions are $\omega=\pm\omega_2$, where $\omega_2 := \widehat{(v_\ast - u_0)}$.
\end{enumerate} 
\end{lemma}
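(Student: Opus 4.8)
The plan is to solve each vector equation for $\omega$ directly, using the rigid structure of the map $\omega\mapsto v(u',v',\omega)$ recorded in Lemma~\ref{lemma:uvproperty}. For part (1), I would unfold the definition $v(u_0,v_0,\omega)=v_0+[(u_0-v_0)\cdot\omega]\,\omega$, so that the equation $v_\ast=v(u_0,v_0,\omega)$ is equivalent to
$$
v_\ast-v_0=[(u_0-v_0)\cdot\omega]\,\omega.
$$
The right-hand side is a scalar multiple of $\omega$, so $v_\ast-v_0$ must be parallel to $\omega$. Writing $\lambda:=(u_0-v_0)\cdot\omega$, the distinctness $v_\ast\neq v_0$ forces $\lambda\neq 0$, $|\lambda|=|v_\ast-v_0|$, and $\omega=\pm\widehat{(v_\ast-v_0)}=\pm\omega_1$. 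Substituting $\omega=(v_\ast-v_0)/\lambda$ back into $\lambda=(u_0-v_0)\cdot\omega$ gives $\lambda^2=(u_0-v_0)\cdot(v_\ast-v_0)$, which combined with $\lambda^2=|v_\ast-v_0|^2$ yields exactly the compatibility relation~\eqref{eq:rel1}. Conversely, if~\eqref{eq:rel1} holds, a one-line computation shows $v_0+[(u_0-v_0)\cdot\omega_1]\omega_1=v_0+(v_\ast-v_0)=v_\ast$, and $-\omega_1$ is also a solution by the evenness in Lemma~\ref{lemma:uvproperty}(1).

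For part (2), I would reduce to the same computation via Lemma~\ref{lemma:uvproperty}(2): $v(v_0,u_0,\omega)=u(u_0,v_0,\omega)=u_0-[(u_0-v_0)\cdot\omega]\,\omega$. Then $v_\ast=v(v_0,u_0,\omega)$ becomes $v_\ast-u_0=-[(u_0-v_0)\cdot\omega]\,\omega$, and the argument of part (1)—now with $v_\ast-u_0$ in place of $v_\ast-v_0$ and an extra sign—gives $\omega=\pm\widehat{(v_\ast-u_0)}=\pm\omega_2$ together with $|v_\ast-u_0|^2=-(v_\ast-u_0)\cdot(u_0-v_0)$, i.e.~\eqref{eq:rel2}; the converse is again checked by direct substitution of $\omega_2$ into $u_0-[(u_0-v_0)\cdot\omega_2]\omega_2$ using~\eqref{eq:rel2}, and $-\omega_2$ works by Lemma~\ref{lemma:uvproperty}(1).

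There is no genuine obstacle here; this is an elementary calculation. The only points requiring care are the non‑degeneracy step—invoking $v_\ast\neq v_0$ (respectively $v_\ast\neq u_0$) to ensure the scalar $\lambda$ (respectively $\mu$) is nonzero, so that $\omega$ is truly determined up to sign—and bookkeeping the two signs $\pm\omega_i$ as solutions, which is legitimate precisely because $v(u',v',\cdot)$ is even in $\omega$.
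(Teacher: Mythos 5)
Your proof is correct and follows essentially the same route as the paper: unfold $v(u_0,v_0,\omega)=v_0+[(u_0-v_0)\cdot\omega]\omega$, match direction and magnitude to force $\omega=\pm\omega_1$ together with the compatibility relation~\eqref{eq:rel1}, verify the converse by direct substitution, and obtain part (2) by swapping the roles of $u_0$ and $v_0$. If anything, your squaring of the scalar $\lambda$ pins down the sign in~\eqref{eq:rel1} slightly more carefully than the paper's ``matching the amplitudes'' step, but the argument is the same.
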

\begin{proof}
First, in view of~\eqref{eq:uv}, the equation $v_\ast = v(u_0,v_0,\omega)$ is equivalent to
$$
v_\ast - v_0 = [(u_0-v_0)\cdot\omega]\omega.
$$
If a solution $\omega\in\mathbb{S}^{n-1}$ exists, matching the directions implies $\omega = \pm \omega_1$, and matching the amplitudes implies 
$$
|v_\ast - v_0| = |(u_0-v_0)\cdot\omega_1| = |(u_0-v_0)\cdot\frac{v_\ast - v_0}{|v_\ast - v_0|}|
$$ 
which is the desired relation~\eqref{eq:rel1}. Conversely, if~\eqref{eq:rel1} holds, one has
$$
[(u_0-v_0)\cdot (\pm\omega_1)] (\pm\omega_1) = [(u_0-v_0)\cdot \frac{v_\ast - v_0}{|v_\ast - v_0|}] \frac{v_\ast - v_0}{|v_\ast - v_0|} = v_\ast - v_0,
$$
indicating that $\omega=\pm\omega_1$ are solutions.

Second, switching the roles of $u_0$ and $v_0$ in \eqref{eq:rel1} yields \eqref{eq:rel2}.
\end{proof}

Next, we prove that the relations~\eqref{eq:rel1} and~\eqref{eq:rel2} are actually equivalent, and $\omega_1$ is orthogonal to $\omega_2$ whenever they exist. This is the content of the next lemma. 
\begin{lemma} \label{lemma:ortho}
Let $u_0, v_0, v_\ast\in\mathbb{R}^n$ be distinct vectors. Set $\omega_1 := \widehat{(v_\ast - v_0)}$ and $\omega_2 := \widehat{(v_\ast - u_0)}$. Then both~\eqref{eq:rel1} and~\eqref{eq:rel2} are equivalent to the orthogonality relation
\begin{equation} \label{eq:rel3}
(v_\ast - v_0)\cdot (v_\ast - u_0) = 0.
\end{equation}
In particular, if one of $\omega_1$ and $\omega_2$ exists, so does the other, and we have $\omega_1\cdot\omega_2 = 0$.
\end{lemma}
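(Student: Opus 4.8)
The statement claims that the two scalar relations \eqref{eq:rel1} and \eqref{eq:rel2} are each equivalent to the orthogonality relation \eqref{eq:rel3}, and that consequently $\omega_1$ and $\omega_2$ exist together and are orthogonal. The plan is to prove the single algebraic equivalence \eqref{eq:rel1} $\Longleftrightarrow$ \eqref{eq:rel3} by a direct vector computation, and then obtain \eqref{eq:rel2} $\Longleftrightarrow$ \eqref{eq:rel3} for free by the $u_0\leftrightarrow v_0$ symmetry already observed in the proof of Lemma~\ref{lemma:solvability1}; finally, the "in particular" clause is an immediate corollary once all three relations are tied together, since by Lemma~\ref{lemma:solvability1} the existence of $\omega_1$ (resp. $\omega_2$) is \emph{equivalent} to \eqref{eq:rel1} (resp. \eqref{eq:rel2}), and $\omega_1\cdot\omega_2 = 0$ is just \eqref{eq:rel3} rewritten in terms of the normalized vectors.

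First I would set $p := v_\ast - v_0$ and $q := v_\ast - u_0$, so that $u_0 - v_0 = q - p$ (noting $u_0 - v_0 = (v_\ast - v_0) - (v_\ast - u_0)$, hence $= p - q$; I'll fix the sign carefully in the writeup). Then \eqref{eq:rel1} reads $(v_\ast - v_0)\cdot(u_0-v_0) = |v_\ast-v_0|^2$, i.e. $p\cdot(p - q) = |p|^2$, which simplifies at once to $p\cdot q = 0$, that is, \eqref{eq:rel3}. This is a two-line manipulation. Similarly \eqref{eq:rel2} reads $-(v_\ast - u_0)\cdot(u_0 - v_0) = |v_\ast - u_0|^2$, i.e. $-q\cdot(p-q) = |q|^2$, which simplifies to $-q\cdot p + |q|^2 = |q|^2$, again giving $p\cdot q = 0$. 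So both \eqref{eq:rel1} and \eqref{eq:rel2} collapse to \eqref{eq:rel3}, and in particular to each other.

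For the final assertion: suppose $\omega_1 = \widehat{(v_\ast - v_0)}$ exists as a solution of $v_\ast = v(u_0,v_0,\omega)$; by Lemma~\ref{lemma:solvability1}(1) this is equivalent to \eqref{eq:rel1}, hence to \eqref{eq:rel3}, hence to \eqref{eq:rel2}, hence by Lemma~\ref{lemma:solvability1}(2) to the existence of $\omega_2 = \widehat{(v_\ast - u_0)}$ as a solution of $v_\ast = v(v_0,u_0,\omega)$, and symmetrically in the other direction. When they do exist, $\omega_1\cdot\omega_2 = \dfrac{(v_\ast - v_0)\cdot(v_\ast - u_0)}{|v_\ast - v_0|\,|v_\ast - u_0|} = 0$ by \eqref{eq:rel3}. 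I should note in passing that the distinctness of $u_0,v_0,v_\ast$ guarantees $p\neq 0$ and $q\neq 0$, so that $\omega_1,\omega_2$ are well-defined unit vectors.

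There is essentially no obstacle here: the whole lemma is elementary linear algebra, and the only thing to be careful about is bookkeeping of signs in the identity $u_0 - v_0 = (v_\ast - v_0) - (v_\ast - u_0)$ and consistent use of Lemma~\ref{lemma:solvability1} to pass between "equation solvable" and "scalar relation holds." The mild subtlety worth a sentence is that one must invoke \emph{both} directions of Lemma~\ref{lemma:solvability1} (necessity and sufficiency) to get the "if one exists so does the other" statement, rather than just the forward implication.
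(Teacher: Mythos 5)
Your proof is correct and follows essentially the same route as the paper: both reduce \eqref{eq:rel1} and \eqref{eq:rel2} to \eqref{eq:rel3} by a direct dot-product expansion (the paper handles \eqref{eq:rel2} by the $u_0\leftrightarrow v_0$ symmetry, you compute it directly) and then invoke both directions of Lemma~\ref{lemma:solvability1} for the existence and orthogonality statement. The momentary sign slip $u_0-v_0=q-p$ in your plan is corrected in the actual computation, so nothing is missing.
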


\begin{proof}
The relation~\eqref{eq:rel1} is equivalent to
\begin{align*}
0 & = (v_\ast - v_0)\cdot(u_0 - v_0) - |v_\ast - v_0|^2 = (v_\ast - v_0)\cdot(u_0 - v_0) - (v_\ast - v_0)\cdot(v_\ast - v_0) \\
 & = (v_\ast - v_0)\cdot(u_0 - v_\ast),
\end{align*}
which is~\eqref{eq:rel3}. Switching the roles of $u_0$ and $v_0$ gives the equivalence of~\eqref{eq:rel2} and~\eqref{eq:rel3}.

The solution $\omega_1$ exists if and only if~\eqref{eq:rel1} holds, which is equivalent to~\eqref{eq:rel3} thus~\eqref{eq:rel2}. The latter holds if and only if $\omega_2$ exists. Finally, the directions of $\omega_1$ and $\omega_2$ are identical to those of $v_\ast - v_0$ and $v_\ast - u_0$, respectively, hence $\omega_1 \cdot \omega_2 = 0$ whenever they exist.
\end{proof}

\begin{lemma} \label{lemma:uvomega}
Let $u_0, v_0, v_\ast\in\mathbb{R}^n$ be distinct vectors and $\omega_1$, $\omega_2$ be defined as above. Suppose~\eqref{eq:rel3} holds so that $\omega_1$ and $\omega_2$ exist. We have
\begin{enumerate}
\item $u(u_0,v_0,\omega_2) = v(u_0,v_0,\omega_1) = v_\ast$;
\item $v(u_0,v_0,\omega_2) = u(u_0,v_0,\omega_1) = u_0 + v_0 - v_\ast$.
\end{enumerate} 
\end{lemma}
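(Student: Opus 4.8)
The plan is to deduce all four identities directly from Lemma~\ref{lemma:solvability1}, Lemma~\ref{lemma:uvproperty}, and the elementary additivity identity
$$u(u',v',\omega)+v(u',v',\omega)=u'+v',$$
which one reads off at once from~\eqref{eq:uv} since the two $[(u'-v')\cdot\omega]\omega$ contributions cancel. The hypothesis~\eqref{eq:rel3} enters only through Lemma~\ref{lemma:ortho}, which guarantees that both~\eqref{eq:rel1} and~\eqref{eq:rel2} hold, so that $\omega_1$ and $\omega_2$ are genuine solutions of the equations treated in Lemma~\ref{lemma:solvability1} and all four quantities appearing in the statement are well-defined.

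For item (1), I would first note that $v(u_0,v_0,\omega_1)=v_\ast$ is exactly the conclusion ``the solutions are $\omega=\pm\omega_1$'' of Lemma~\ref{lemma:solvability1}(1) under~\eqref{eq:rel1}, specialized to the $+$ sign. To obtain $u(u_0,v_0,\omega_2)=v_\ast$, I would combine Lemma~\ref{lemma:solvability1}(2), which gives $v(v_0,u_0,\omega_2)=v_\ast$, with the relabeling identity $v(v_0,u_0,\omega)=u(u_0,v_0,\omega)$ from Lemma~\ref{lemma:uvproperty}(2). Item (2) is then immediate from the additivity identity applied with $(u',v')=(u_0,v_0)$: evaluating at $\omega=\omega_1$ gives $u(u_0,v_0,\omega_1)=u_0+v_0-v(u_0,v_0,\omega_1)=u_0+v_0-v_\ast$, and evaluating at $\omega=\omega_2$ gives $v(u_0,v_0,\omega_2)=u_0+v_0-u(u_0,v_0,\omega_2)=u_0+v_0-v_\ast$.

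I do not anticipate a genuine obstacle: the statement is bookkeeping built on the preceding lemmas. The only point deserving attention is the logical role of~\eqref{eq:rel3} --- a priori only one of the solvability conditions~\eqref{eq:rel1},~\eqref{eq:rel2} need be satisfied, and it is precisely Lemma~\ref{lemma:ortho} that forces them to hold simultaneously, hence that $\omega_1$ and $\omega_2$ both exist. (If one preferred to avoid the additivity identity, item (2) could instead be obtained by substituting $[(u_0-v_0)\cdot\omega_1]\omega_1=v_\ast-v_0$, read off from $v(u_0,v_0,\omega_1)=v_\ast$, and the symmetric relation $[(u_0-v_0)\cdot\omega_2]\omega_2=v_\ast-u_0$ into the formulas~\eqref{eq:uv}; but the additivity route is shorter.)
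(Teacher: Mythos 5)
Your proof is correct, but it is organized differently from the paper's. The paper proves all four identities by direct substitution: it plugs $\omega_1=\widehat{(v_\ast-v_0)}$ and $\omega_2=\widehat{(v_\ast-u_0)}$ into the explicit formulas~\eqref{eq:uv} and simplifies using~\eqref{eq:rel1} and~\eqref{eq:rel2}. You instead avoid recomputation entirely: item (1) is read off from Lemma~\ref{lemma:solvability1} (its statement that $\pm\omega_1$, resp.\ $\pm\omega_2$, solve $v_\ast=v(u_0,v_0,\omega)$, resp.\ $v_\ast=v(v_0,u_0,\omega)$) combined with the swap identity $v(v_0,u_0,\omega)=u(u_0,v_0,\omega)$ of Lemma~\ref{lemma:uvproperty}(2), and item (2) follows from the momentum-conservation identity $u(u',v',\omega)+v(u',v',\omega)=u'+v'$, which is indeed immediate from~\eqref{eq:uv}. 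Both routes are sound; yours buys economy and makes the logical dependence on the earlier lemmas explicit (including the correct observation that~\eqref{eq:rel3} is what guarantees, via Lemma~\ref{lemma:ortho}, that~\eqref{eq:rel1} and~\eqref{eq:rel2} hold simultaneously), while the paper's direct computation is self-contained and does not require the reader to unwind which sign of $\pm\omega_k$ is being used or to introduce the additivity identity. No gap in either argument.
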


\begin{proof}
\begin{enumerate}
\item We compute
\begin{align*}
u(u_0,v_0,\omega_2) & = u_0 - [(u_0 - v_0)\cdot\omega_2]\omega_2 = u_0 - [(u_0 - v_0)\cdot \frac{v_\ast - u_0}{|v_\ast - u_0|}]\frac{v_\ast - u_0}{|v_\ast - u_0|} \\
 & = u_0 + v_\ast - u_0 = v_\ast,
\end{align*}
where the third equality comes from~\eqref{eq:rel2}. On the other hand,
\begin{align*}
v(u_0,v_0,\omega_1) & = v_0 + [(u_0 - v_0)\cdot\omega_1]\omega_1 = v_0 + [(u_0 - v_0)\cdot \frac{v_\ast - v_0}{|v_\ast - v_0|}]\frac{v_\ast - v_0}{|v_\ast - v_0|} \\
 & = v_0 + v_\ast - v_0 = v_\ast,
\end{align*}
where the third equality comes from~\eqref{eq:rel1}.

\item Likewise, we compute
\begin{align*}
v(u_0,v_0,\omega_2) & = v_0 + [(u_0 - v_0)\cdot\omega_2]\omega_2 = v_0 + [(u_0 - v_0)\cdot \frac{v_\ast - u_0}{|v_\ast - u_0|}]\frac{v_\ast - u_0}{|v_\ast - u_0|} \\
 & = v_0 - (v_\ast - u_0) = v_0 - v_\ast + u_0,
\end{align*}
On the other hand,
\begin{align*}
u(u_0,v_0,\omega_1) & = u_0 - [(u_0 - v_0)\cdot\omega_1]\omega_1 = u_0 - [(u_0 - v_0)\cdot \frac{v_\ast - v_0}{|v_\ast - v_0|}]\frac{v_\ast - v_0}{|v_\ast - v_0|} \\
 & = u_0 - (v_\ast - v_0) = u_0 - v_\ast + v_0.
\end{align*}
\end{enumerate}
\end{proof}

\subsection{Calculation of $I_1$--$I_4$.}
We are ready to compute the integrals $I_k$, $k=1,\cdots, 4$.  
\begin{proposition} \label{prop:I1}
Let $u_0, v_0, v_\ast\in\mathbb{R}^n$ be three distinct vectors, then
\begin{enumerate}
\item  
$$
I_1 =
\left\{
\begin{array}{ll}
|(u_0-v_0)\cdot\omega_1|^{-n}B(v_\ast, u_0 + v_0 - v_\ast,\omega_1) & \quad\quad\quad \text{ if } \eqref{eq:rel3} \text{ holds,} \\
0 & \quad\quad\quad \text{ otherwise};
\end{array}
\right.
$$
\item
$$
I_2 =
\left\{
\begin{array}{ll}
|(u_0-v_0)\cdot\omega_2|^{-n}B(v_\ast, u_0 + v_0 - v_\ast,\omega_2) & \quad\quad\quad \text{ if } \eqref{eq:rel3} \text{ holds,} \\
0 & \quad\quad\quad \text{ otherwise}.
\end{array}
\right.
$$

\end{enumerate}
\end{proposition}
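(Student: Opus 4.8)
The plan is to evaluate $I_1$ and $I_2$ by using that, for each fixed $\omega\in\mathbb{S}^{n-1}$, the collision map $(u,v)\mapsto(u',v')$ is a linear isometry of $\mathbb{R}^n\times\mathbb{R}^n$, hence has Jacobian $1$; this allows the pre-collision integration variables to be traded for the post-collision ones via~\eqref{eq:uv}. For $I_1$, I would first make this change of variables (writing $u,v$ as functions of $u',v',\omega$), and then integrate out $v'$ and $u'$ against $\delta_{v_0}(v')$ and $\delta_{u_0}(u')$, which fixes $v'=v_0$, $u'=u_0$. This reduces $I_1$ to the integral over the sphere
\[
I_1=\int_{\mathbb{S}^{n-1}}B\bigl(v(u_0,v_0,\omega),\,u(u_0,v_0,\omega),\,\omega\bigr)\,\delta_{v_\ast}\bigl(v(u_0,v_0,\omega)\bigr)\,d\omega.
\]
For $I_2$ the same change of variables applies, but since the two delta factors now read $\delta_{v_0}(u')\delta_{u_0}(v')$, integrating them out fixes $u'=v_0$, $v'=u_0$; invoking the symmetry $B(v,u,\omega)=B(u,v,\omega)$ and Lemma~\ref{lemma:uvproperty}(2) then brings $I_2$ to the analogous form with $\delta_{v_\ast}\bigl(u(u_0,v_0,\omega)\bigr)$ in place of $\delta_{v_\ast}\bigl(v(u_0,v_0,\omega)\bigr)$.

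Next I would localize the $\omega$-integral. By Lemma~\ref{lemma:solvability1}(1), the equation $v(u_0,v_0,\omega)=v_\ast$ is solvable for $\omega\in\mathbb{S}^{n-1}$ precisely when~\eqref{eq:rel1} holds, which by Lemma~\ref{lemma:ortho} is equivalent to~\eqref{eq:rel3}, and in that case the solutions are $\omega=\pm\omega_1$. Hence, when~\eqref{eq:rel3} fails, the integrand is identically zero and $I_1=0$; the same reasoning via Lemma~\ref{lemma:solvability1}(2) gives $I_2=0$ off~\eqref{eq:rel3}. When~\eqref{eq:rel3} holds, I would evaluate the surviving integral by the change-of-variables rule for a delta composed with the map $\omega\mapsto v(u_0,v_0,\omega)$: the contributions from $\omega=\omega_1$ and $\omega=-\omega_1$ coincide, because $v(u_0,v_0,-\omega)=v(u_0,v_0,\omega)$ and $u(u_0,v_0,-\omega)=u(u_0,v_0,\omega)$ by Lemma~\ref{lemma:uvproperty}(1) together with the evenness $B(v,u,-\omega)=B(v,u,\omega)$; and the accompanying Jacobian factor, governed by the coefficient $(u_0-v_0)\cdot\omega$ appearing in~\eqref{eq:uv}, produces the weight $|(u_0-v_0)\cdot\omega_1|^{-n}$. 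Finally, Lemma~\ref{lemma:uvomega} supplies the velocity arguments, $v(u_0,v_0,\omega_1)=v_\ast$ and $u(u_0,v_0,\omega_1)=u_0+v_0-v_\ast$, which gives the claimed expression for $I_1$; the computation of $I_2$ is identical with $\omega_1$ replaced by $\omega_2$, using instead $u(u_0,v_0,\omega_2)=v_\ast$ and $v(u_0,v_0,\omega_2)=u_0+v_0-v_\ast$ from Lemma~\ref{lemma:uvomega}.

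The subtle point, which is the real content of the proof, is the final step: $\delta_{v_\ast}$ is an $n$-dimensional delta, whereas $\omega\mapsto v(u_0,v_0,\omega)$ has the $(n-1)$-dimensional domain $\mathbb{S}^{n-1}$, so neither the composition $\delta_{v_\ast}\bigl(v(u_0,v_0,\omega)\bigr)$ nor $I_1$ itself is a classical function of $v_\ast$; they are distributions supported on the sphere cut out by~\eqref{eq:rel3}, and the identities above should be read in that sense. As indicated after the definition of $S$, the rigorous version is obtained by replacing each delta by a smooth approximate identity $\rho_\varepsilon$ and letting $\varepsilon\to0$; the main effort is to check that the isometric change of variables, the localization at $\pm\omega_1$ (resp.\ $\pm\omega_2$), and the Jacobian evaluation yielding $|(u_0-v_0)\cdot\omega_1|^{-n}$ (resp.\ $|(u_0-v_0)\cdot\omega_2|^{-n}$) all persist in this limit. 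Once that is done, the remaining steps are the elementary identities already recorded in Lemmas~\ref{lemma:uvproperty}--\ref{lemma:uvomega}.
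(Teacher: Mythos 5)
Your proposal follows essentially the same route as the paper: the isometric change of variables $(u,v)\mapsto(u',v')$ with unit Jacobian, integrating out the deltas to reduce to a sphere integral, localizing at $\omega=\pm\omega_1$ (resp.\ $\pm\omega_2$) via Lemmas~\ref{lemma:solvability1}--\ref{lemma:ortho}, doubling by evenness of $B$ and of $u,v$ in $\omega$, and identifying the arguments by Lemma~\ref{lemma:uvomega}, with the Jacobian weight $|(u_0-v_0)\cdot\omega_k|^{-n}$ treated at the same formal level as in the paper (which likewise defers rigor to smooth approximations of the deltas). Your handling of $I_2$ via Lemma~\ref{lemma:uvproperty}(2) and the symmetry of $B$ is just a more explicit rendering of the paper's ``switch $u_0$ and $v_0$'' step, so the argument is correct and matches the paper's proof.
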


\begin{proof}
We make the change of variable $(u,v)\mapsto (u',v')$ in $I_1$. The resulting Jacobian is $1$ since the transformation is isometric for each $\omega\in\mathbb{S}^{n-1}$. Therefore,   
\begin{align*}
I_1 & =  \int_{\R^n} \int_{\R^n} \int_{\mathbb{S}^{n-1}} B(v,u,\omega) \delta_{v_0}(v') \delta_{u_0}(u') \delta_{v_\ast}(v) \,d\omega du dv \\
 & =  \int_{\R^n} \int_{\R^n} \int_{\mathbb{S}^{n-1}} B(v(u',v',\omega),u(u',v',\omega),\omega) \delta_{v_0}(v') \delta_{u_0}(u') \delta_{v_\ast}(v(u',v',\omega)) \,d\omega du' dv' \\
 & =  \int_{\mathbb{S}^{n-1}} B(v(u_0,v_0,\omega),u(u_0,v_0,\omega),\omega) \delta_{v_\ast}(v(u_0,v_0,\omega)) \,d\omega.
\end{align*}
Thus, if $v_\ast \neq v(u_0,v_0,\omega)$, then $I_1=0$.

We have seen that the equation $v_\ast = v(u_0,v_0,\omega)$ have solutions $\omega=\pm\omega_1$ if and only if~\eqref{eq:rel3} holds. Therefore, combining with the change of variable, we can derive
\begin{align*}
I_1 & = 2^{-1}|(u_0-v_0)\cdot\omega_1|^{-n}(B(v(u_0,v_0,\omega_1),u(u_0,v_0,\omega_1),\omega_1) + B(v(u_0,v_0,-\omega_1),u(u_0,v_0,-\omega_1),-\omega_1)) \\
 & = 2^{-1}|(u_0-v_0)\cdot\omega_1|^{-n}( B(v(u_0,v_0,\omega_1),u(u_0,v_0,\omega_1),\omega_1) + B(v(u_0,v_0,\omega_1),u(u_0,v_0,\omega_1),-\omega_1) )\\
 & =  |(u_0-v_0)\cdot\omega_1|^{-n}B(v(u_0,v_0,\omega_1),u(u_0,v_0,\omega_1),\omega_1) \\
 & =|(u_0-v_0)\cdot\omega_1|^{-n}B(v_\ast,u_0 + v_0 - v_\ast,\omega_1),
\end{align*}
where the term $2^{-1}|(u_0-v_0)\cdot\omega_1|^{-n}$ comes from the Jacobian, the second equality follows from Lemma~\ref{lemma:uvproperty}, and the third equality is valid since $B(v,u,\omega)$ is assumed to be an even function of $\omega$, and the last equality follows from Lemma~\ref{lemma:uvomega}.

To obtain result (2), one just switches $u_0$ with $v_0$ in the above argument and applies the properties of $B$.  
\end{proof}

\begin{proposition} \label{prop:I3I4}
Let $u_0, v_0, v_\ast\in\mathbb{R}^n$ be three distinct vectors. Then $I_3 = I_4 = 0$.
\end{proposition}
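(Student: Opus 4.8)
The plan is to treat $I_3$ and $I_4$ directly from their definitions, without any change of variables, since the delta functions in these integrals are evaluated at $v$ and $u$ themselves rather than at the post-collision velocities $v'$ and $u'$. Consider $I_3$ first:
\begin{align*}
I_3 = - \int_{\R^n} \int_{\R^n}\int_{\mathbb{S}^{n-1}} B(v,u,\omega) \delta_{v_0}(v) \delta_{u_0}(u) \delta_{v_\ast}(v) \,d\omega du dv.
\end{align*}
The $v$-integration contains the product $\delta_{v_0}(v)\delta_{v_\ast}(v)$. Since $v_0$ and $v_\ast$ are \emph{distinct} by hypothesis, these two delta functions have disjoint supports (interpreting the deltas as limits of cut-off functions concentrating near $v_0$ and $v_\ast$, as noted after the definition of $S(v_\ast,v_0,u_0)$), so the product vanishes identically and hence $I_3 = 0$. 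The same reasoning applies to $I_4$: there the $v$-integration involves $\delta_{u_0}(v)\delta_{v_\ast}(v)$, and since $u_0 \neq v_\ast$ (the three vectors are pairwise distinct) this product is again identically zero, giving $I_4 = 0$.

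To make this rigorous in the cut-off approximation, I would replace $\delta_{v_0}$, $\delta_{u_0}$, $\delta_{v_\ast}$ by smooth mollifiers $\phi^\rho_{v_0}$, $\phi^\rho_{u_0}$, $\phi^\rho_{v_\ast}$ supported in balls of radius $\rho$ about the respective centers; once $\rho < \tfrac{1}{2}\min\{|v_0 - v_\ast|,\,|u_0 - v_\ast|\}$, the relevant products of mollifiers vanish pointwise, so the approximating integrals are exactly zero for all small $\rho$, and therefore the limit is $0$. No integrability subtlety arises because the integrand is identically zero before any limit is taken. There is essentially no obstacle here — the content is simply the observation that $I_3$ and $I_4$ encode the ``loss'' (gain-free) part of the collision at a \emph{single} velocity, and pinning $v$ simultaneously to two distinct values is impossible; the only point requiring a word of care is the justification of the delta-function manipulation via the mollifier approximation, which I would state explicitly but not belabor.
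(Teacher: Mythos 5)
Your proposal is correct and rests on the same observation as the paper's proof: the paper integrates out $\delta_{v_0}(v)$ and $\delta_{u_0}(u)$ first and is left with the factor $\delta_{v_\ast}(v_0)$ (resp.\ $\delta_{v_\ast}(u_0)$), which vanishes because the vectors are distinct, while you note directly that the product of the two delta factors in $v$ has empty support --- the same fact. Your explicit mollifier justification is a slightly more careful rendering of the remark the paper makes about replacing deltas by smooth cut-offs, but the argument is essentially identical.
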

\begin{proof}
$I_3$ can be computed as
\begin{align*}
I_3 & = - \int_{\R^n} \int_{\R^n}\int_{\mathbb{S}^{n-1}} B(v,u,\omega) \delta_{v_0}(v) \delta_{u_0}(u) \delta_{v_\ast}(v) \,d\omega du dv  \\
 & = - \int_{\mathbb{S}^{n-1}} B(v_0,u_0,\omega) \delta_{v_\ast}(v_0) \,d\omega  \\
 & = - \left( \int_{\mathbb{S}^{n-1}} B(v_0,u_0,\omega) \,d\omega \right) \delta_{v_\ast}(v_0) = 0,
\end{align*}
where the last identity is valid since $v_\ast, v_0$ are distinct vectors.

Similarly, we have
\begin{align*}
I_4 & = - \int_{\R^n} \int_{\R^n}\int_{\mathbb{S}^{n-1}} B(v,u,\omega) \delta_{v_0}(u) \delta_{u_0}(v) \delta_{v_\ast}(v) \,d\omega du dv  \\
 & = - \int_{\mathbb{S}^{n-1}} B(u_0,v_0,\omega) \delta_{v_\ast}(u_0) \,d\omega  \\
 & = - \left( \int_{\mathbb{S}^{n-1}} B(v_0,u_0,\omega) \,d\omega \right) \delta_{v_\ast}(u_0) = 0,
\end{align*}
where the last identity is also valid since $v_\ast, u_0$ are distinct vectors.
\end{proof}

\subsection{Recovery of $B$}
We are ready to derive the reconstruction formula of the kernel $B$ in Theorem~\ref{thm:ReconstructionFormula} when $B$ satisfies both conditions \eqref{constraint B}-\eqref{assumption B}. In addition, with the established formula, we will discuss the uniqueness result under two different constraints as well as \eqref{constraint B}-\eqref{assumption B}, while Theorem~\ref{main thm} only requires \eqref{constraint B}.

\begin{proof}[Proof of Theorem~\ref{thm:ReconstructionFormula}]
Recall that 
$$ 
S(v_\ast, v_0, u_0) = I_1 + I_2 + I_3 + I_4,
$$
where $I_1$--$I_4$ have been computed in Proposition~\ref{prop:I1} and Proposition~\ref{prop:I3I4}.

Since $u_0,v_0,v_\ast$ are distinct, $I_3$ and $I_4$ vanish. We have, from Proposition~\ref{prop:I1}, that
\begin{align*}
S(v_\ast, v_0, u_0)  &= I_1 + I_2  \\
&= 
\left\{
\begin{array}{ll}
\sum_{k=1}^2|(u_0-v_0)\cdot\omega_k|^{-n} B(v_\ast, u_0 + v_0 - v_\ast,\omega_k)  & \quad\quad \text{ if } \eqref{eq:rel3} \text{ holds,} \\
0 & \quad\quad \text{ otherwise}.
\end{array}
\right.
\end{align*}
Given any $(a,b,\theta)\in\mathbb{R}^n\times\mathbb{R}^n\times\mathbb{S}^{n-1}$, we can choose
$$
v_\ast = a, \quad v_0 = a - [(a-b)\cdot\theta]\theta, \quad u_0 = b + [(a-b)\cdot\theta]\theta.
$$
Then
$$
    u_0 + v_0 - v_\ast = b,\quad \omega_1=\pm\theta.
$$

One can check~\eqref{eq:rel3} holds for such triple $(v_\ast,v_0,u_0)$. Moreover, $v_\ast - u_0 = (a-b)-[(a-b)\cdot\theta]\theta$ hence
$$
\omega_2 = \widehat{(v_\ast - u_0)} = \widehat{P_{\theta^\perp}(a-b)}, 
$$
where $$P_{\theta^\perp}(a-b) := (a-b)-[(a-b)\cdot\theta]\theta.$$
In addition, we also have
$$
     |(u_0-v_0)\cdot\omega_1|^2 = |(a-b)\cdot\theta|^2,\quad  |(u_0-v_0)\cdot\omega_2|^2=|a-b|^2-|(a-b)\cdot\theta|^2.
$$
Therefore, we get
\begin{align} \label{eq:SInTemrsOfB}
&S(a,a - [(a-b)\cdot\theta]\theta, b + [(a-b)\cdot\theta]\theta) \notag\\
&= |(u_0-v_0)\cdot\omega_1|^{-2} B(a, b, \theta) +  |(u_0-v_0)\cdot\omega_2|^{-2}B(a, b, \widehat{P_{\theta^\perp}(a-b)})\notag\\
&= |(a-b)\cdot\theta|^{-2} (B(a, b, \theta)+ (|a-b|^2-|(a-b)\cdot\theta|^2)^{-1} B(a, b, \widehat{P_{\theta^\perp}(a-b)}),
\end{align}
which means we can recover the sum on the right hand side. The condition that $v_\ast,u_0,v_0$ being distinct translates to $(a,b,\theta)$ as 
\begin{align}\label{def D}
\mathcal{D}=\{(a,b,\theta)\in\mathbb{R}^n\times\mathbb{R}^n\times\mathbb{S}^{n-1}:\, &(a-b)\cdot\theta\neq 0 \text{ and } a-b \neq [(a-b)\cdot\theta]\theta \notag\\
&\hskip3cm \text{ and } a-b \neq 2[(a-b)\cdot\theta]\theta\}.
\end{align}
As the complement of this set in $\mathbb{R}^n\times\mathbb{R}^n\times\mathbb{S}^{n-1}$ has Lebesgue measure zero, the formula~\eqref{eq:SInTemrsOfB} reconstructs the sum almost everywhere, and eventually everywhere if $B$ is continuous. 
\end{proof}

This reconstruction formula immediately leads to the unique determination of the kernel. Let's recall the statement of Corollary~\ref{Coro}: Suppose that two collision kernels $B_1$ and $B_2$ have identical measurement. Then $B_1=B_2$ in the following two cases: (1) the collision kernel $B=B(v,u)$ is independent of $\omega$; (2)  the monotonicity condition is valid, such as $B_1\geq B_2$.

\begin{proof}[Proof of Corollary~\ref{Coro}]
(1) Since $B$ is independent of $\omega$, we get $$B(a,b) = B(a, b, \theta) = B(a, b, \widehat{P_{\theta^\perp}(a-b)})$$ and thus~\eqref{eq:SInTemrsOfB} uniquely recovers $B$.
	
(2) Since $B_1$ and $B_2$ have the same boundary measurement, from \eqref{eq:SInTemrsOfB}, we have
$$
	0 = |(u_0-v_0)\cdot\omega_1|^{-n}(B_1 - B_2)(a, b, \theta) +  |(u_0-v_0)\cdot\omega_2|^{-2} (B_1 - B_2)(a, b, \widehat{P_{\theta^\perp}(a-b)}).
$$ 
This forces each term on the right hand side to vanish if $(u_0-v_0)\cdot\omega_k\neq 0,\, k=1,2$ since they are non-negative. 
\end{proof}

\vskip1cm
\textbf{Acknowledgment.}
The project was initiated during the IMA workshop ``Mathematics in Optical Imaging" in Spring 2019. The authors would like to thank the hospitality of the IMA.
R.-Y. Lai is partially supported by the NSF grant DMS-1714490. G. Uhlmann is supported in part by the NSF, the Walker Family Endowed Professorship at the University of Washington, and a Si-Yuan Professorship at IAS, HKUST.
Y. Yang is partially supported by the NSF grant DMS-1715178, AMS-Simons travel grant, and the start-up fund from Michigan State University.

\bibliographystyle{abbrv}
\bibliography{boltzmann}

\end{document}